\newtheorem{prop}{Proposition}[subsection]
\newtheorem{cor}{Corollary}[subsection]
\newtheorem*{theorem*}{Theorem}
\theoremstyle{remark}
\newtheorem{remark}{Remark}[subsection]
\theoremstyle{definition}
\newtheorem{defin}{Definition}[subsection]
\newtheorem{example}{Example}[subsection]
\begin{document}

\title{The multi-variable Affine Index Polynomial for tangles}

\author{Nicolas Petit}

\maketitle
\begin{center}Boston College\\ petitnicola@gmail.com, nicolas.petit@bc.edu\end{center}

\begin{abstract}
\noindent The multi-variable affine index polynomial was defined by the author in previous work. The aim of this short note is to update the definition so it is generalizable to virtual tangles and to show it is compatible with tangle decomposition. We also introduce the Turaev moves for virtual tangles, and discuss how to recover the weight of each crossing as an intersection pairing of homology classes.

\end{abstract}

\section{Introduction}

The Affine Index Polynomial, first introduced in \cite{affineindexpolynomial}, is part of a family of virtual knot invariants related to the notion of an intersection index, and was generalized in \cite{virtualknotcobordismaffineindex} to compatible virtual links and then again in \cite{multivariableaip} to a multi-variable polynomial by the author. The aim of this paper is to wrap up some lingering questions from that previous work, extending the generalization to the case of virtual tangles in a manner that is compatible with the tangle decomposition of a virtual knot/link. In doing so we fill what seems to be a gap in the literature, introducing the notion of virtual Turaev moves that let us decompose a virtual tangle as a tensor and composition of simple generators.

This paper is structured as follows. Section \ref{background} contains some background on virtual knots and tangles, the definition of the virtual Turaev moves (Prop. \ref{prop4} and \ref{prop5}), and a brief summary of the author's previous work from \cite{multivariableaip}. Section \ref{maiptangles} discusses the updated definition, showing it is still a Vassiliev invariant of virtual tangles, while section \ref{homolosection} discusses how the weights can be recovered from the intersection pairing of some special homology classes related to the tangle.
\section{Background}
\label{background}
\subsection{Virtual knots and tangles}

This section is inspired by the treatment found in \cite{jacksonmoffatt}.

\begin{defin} 
A virtual knot is a 4-valent graph in which the edges come in two opposite pairs (the ``strands''), with each vertex decorated by a classical crossing (either positive or negative) or a virtual crossing, as shown in Figure \ref{fig1}, modulo the classical and virtual Reidemeister moves, pictured in Figures \ref{fig2} and \ref{fig3}. Virtual knots can also be interpreted as knots (i.e. immersions of $S^1$) in thickened surfaces, up to the classical Reidemeister moves and stabilization/destabilization. An orientation on the knot is a choice of direction to move along the knot; the sign of a crossing is well-defined regardless of the orientation picked.
A virtual knot with multiple components is called a virtual link. Crossings in a virtual link are divided between self-crossings, which are crossings in which both strands belong to the same component, and mixed crossings, where the two strands belong to different components. The sign of a self-crossing is well-defined, but the sign of a mixed crossing depends on the relative orientations of the components involved. When working with a virtual link, we usually assume it is oriented and a starting point for the orientation has been chosen arbitrarily.

\begin{figure}[!h]
\centering
\includegraphics[scale=0.1]{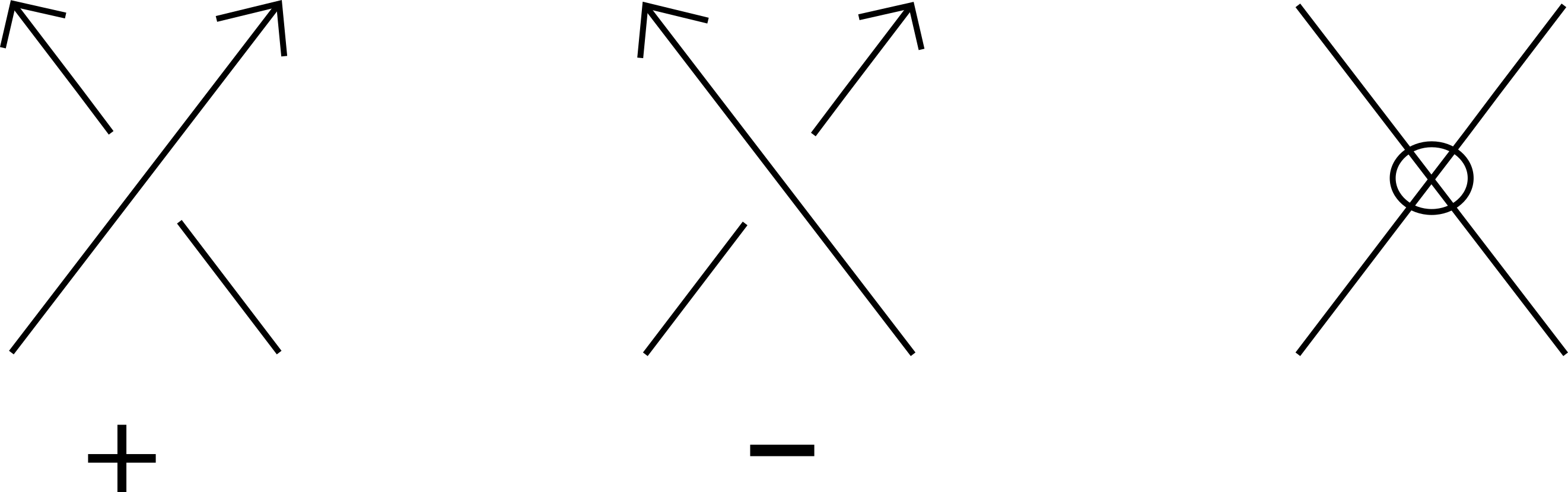}
\caption{The three crossing types}
\label{fig1}
\end{figure}

\begin{figure}[!h]
\centering
\includegraphics[scale=0.065]{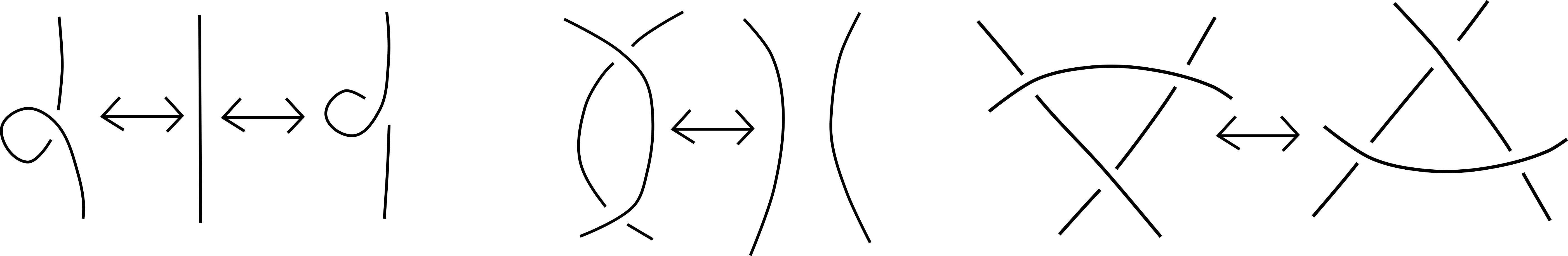}
\caption{The classical Reidemeister moves}
\label{fig2}
\end{figure}

\begin{figure}[!h]
\centering
\includegraphics[scale=0.065]{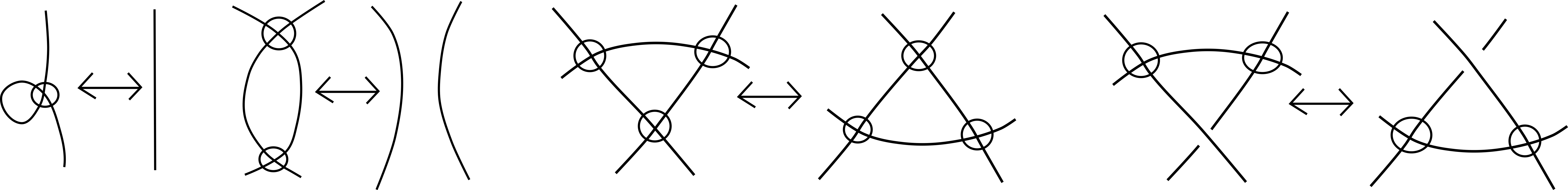}
\caption{The virtual Reidemeister moves}
\label{fig3}
\end{figure}

\end{defin}

\begin{defin}
A virtual tangle is a graph where every vertex is either 1-valent (which we consider as a fixed end) or 4-valent (with the crossing properties above), up to the classical and virtual Reidemeister moves performed away from the 1-valent vertices. A connected component either does not contain 1-valent vertices (in which case we call it closed) or it contains exactly two 1-valent vertices. A tangle whose components are all closed is a link, and a tangle with only one closed component is a knot, so the tangle category supersedes both knots and links. We usually portray our tangles vertically, with the 1-valent vertices glued to the top and/or the bottom of the square, see Figure \ref{fig4}. We say the tangle is an $(m,n)$-tangle if there are $m$ tangle ends glued to the top boundary and $n$ tangle ends glued to the bottom boundary.

Virtual tangles can also be realized as embeddings of a disjoint union of copies of $S^1$ (the closed components) and $I$ (the long components) into a thickened surface with boundary $F$, so that the ends of any long component belong to $\partial F$, up to the Reidemeister moves and stabilization/destabilization.
\end{defin}

\begin{figure}[!h]
\centering
\includegraphics[scale=0.15]{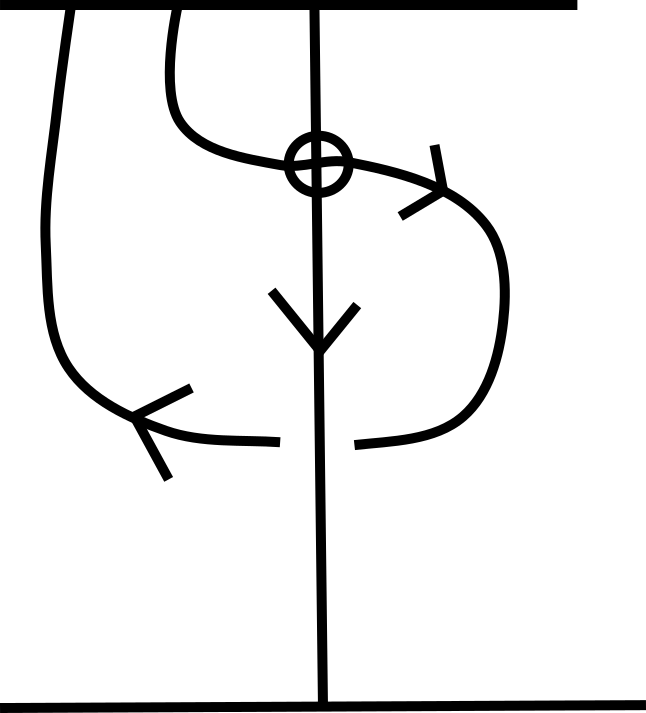}
\caption{An example of a $(3,1)$ virtual tangle. This particular tangle has two components and is also oriented.}
\label{fig4}
\end{figure}

There are two operations that can be performed on tangles, called tensor product and composition. The tensor product of the $(m,n)$-tangle $T$ with the $(p, q)$-tangle $T'$ is the $(m+p, n+q)$-tangle $T\otimes T'$ where we simply put the tangle $T'$ to the right of the tangle $T$. Now suppose that $T$ is an $(m,n)$-tangle and $T'$ is an $(n,p)$-tangle. The composition $T\circ T'$ is the $(m,p)$-tangle obtained by stacking tangle $T$ above tangle $T'$, connecting the ends appropriately. If the tangles $T, T'$ are oriented, the composition is only performed if the resulting tangle has a consistent orientation inherited from $T,T'$. 

We say that a tangle $T$ is generated by a certain set $S$ if we can reconstruct $T$ by composing and tensoring elements of $S$.

\begin{defin}
\label{def1}
For unoriented virtual tangles, consider the set shown in Fig. \ref{fig5}. Clearly any oriented tangle (possibly after isotopy) can be decomposed as the tensor product and composition of the basic pieces from the set.
For oriented virtual tangles we'll similarly use the set portrayed in Fig. \ref{fig6}.
\begin{figure}[!h]
\centering
\includegraphics[scale=0.15]{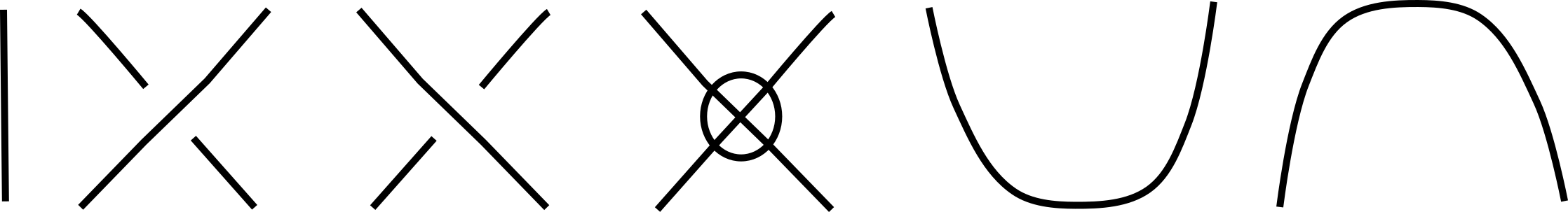}
\caption{The unoriented tangle generators.}
\label{fig5}
\end{figure}
\begin{figure}[!h]
\centering
\includegraphics[scale=0.12]{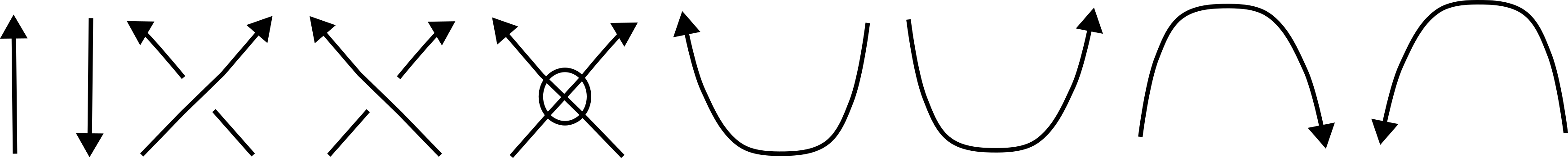}
\caption{The oriented tangle generators.}
\label{fig6}
\end{figure}
\end{defin}

\begin{prop}[The virtual Turaev moves]
\label{prop4}
The category of virtual tangles up to classical and virtual Reidemeister move is equivalent to the category of tangles generated by the unoriented set from Def. \ref{def1} up to the moves from Fig. \ref{fig7}, called the virtual Turaev moves.
\begin{figure}[!h]
\centering
\includegraphics[scale=0.07]{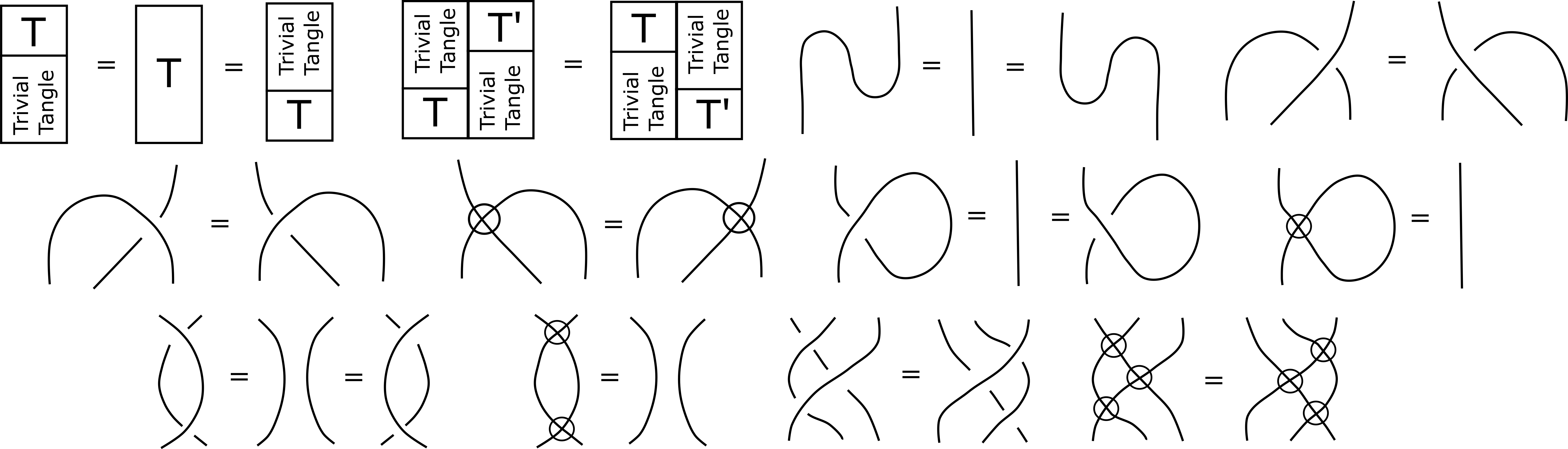}
\caption{The unoriented Turaev moves.}
\label{fig7}
\end{figure}
\end{prop}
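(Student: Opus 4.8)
The statement is an instance of the classical paradigm --- going back to Turaev and to Freyd--Yetter --- that presents a diagrammatic category by generators and relations; the novelty here is the presence of virtual crossings and the virtual Reidemeister moves. The plan is to exhibit the evident comparison functor $\Phi$ from the category generated by the pieces of Def.~\ref{def1} (modulo the moves of Fig.~\ref{fig7}) to the category of virtual tangles up to classical and virtual Reidemeister moves, and to show $\Phi$ is an equivalence by checking it is essentially surjective, full, and faithful. Since both categories have the same objects (the finite sequences of marked boundary points, recorded by the integers $m,n$ of an $(m,n)$-tangle), essential surjectivity is immediate, so the content is fullness and faithfulness.

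For fullness I would put an arbitrary diagram into Morse position. Perturbing by a planar isotopy --- which is among the allowed equivalences --- I may assume that the projection $h$ of the tangle to the vertical direction is a Morse function with nondegenerate critical points (local maxima give caps, local minima give cups), and that all critical points, all classical crossings, and all virtual crossings occur at pairwise distinct heights. Slicing the square along horizontal lines placed between consecutive special heights cuts the diagram into strips, each of which is a tensor product of vertical identity strands with exactly one generator. Reading the strips from bottom to top exhibits the diagram as a composite of tensors of generators, so every virtual tangle lies in the image of $\Phi$; this is precisely the decomposition asserted in Def.~\ref{def1}.

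Faithfulness is the heart of the argument. I would first check soundness: each move of Fig.~\ref{fig7} is realized by a planar isotopy together with a single classical or virtual Reidemeister move, a finite local verification. For completeness, suppose two generator-words $W_1,W_2$ have $\Phi$-images related by a finite sequence of planar isotopies and classical and virtual Reidemeister moves; choose a generic one-parameter family $T_t$ of diagrams realizing this equivalence with the endpoints held fixed on $\partial(I\times I)$. Genericity ensures that $T_t$ is in Morse position for all but finitely many $t$, and that at each exceptional parameter exactly one elementary event occurs: (i) two pieces in adjacent strips exchange heights (a far-commutativity/interchange move); (ii) a crossing slides past a cup or a cap through a critical level; (iii) a maximum and a minimum are created or cancelled (a zig-zag straightening, which also covers the Reidemeister~I case); (iv) a classical Reidemeister~II or III move; or (v) a virtual Reidemeister move or a mixed classical--virtual move. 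In each case the induced change of generator-word is, by inspection, one of the moves of Fig.~\ref{fig7} or a short composite of them, so traversing $t\in[0,1]$ rewrites $W_1$ into $W_2$ through Turaev moves.

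The main obstacle is the exhaustive codimension-one analysis in case (v), i.e. controlling the virtual crossings. I expect to isolate this as a lemma: the \emph{detour move} --- permitting any arc that meets the rest of the diagram only in virtual crossings to be re-routed arbitrarily --- follows at the generator level from the virtual $R2$, virtual $R3$ and mixed moves included in Fig.~\ref{fig7}. Granting this lemma, every event involving virtual crossings (including birth/death and sliding events in which the moving arc is virtual) reduces to a detour, while the purely classical events are handled exactly as in Turaev's original theorem. The remaining bookkeeping --- verifying that the finite list of events above is genuinely exhaustive for a generic family, and that no relation beyond those in Fig.~\ref{fig7} is forced --- is routine but delicate, and is where the care of the proof must be concentrated.
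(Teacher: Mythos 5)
Your proposal is correct and follows essentially the same route as the paper: the paper's own proof is a three-sentence sketch asserting exactly what you argue, namely that the Turaev moves arise by adapting a generating set of Reidemeister moves (classical and virtual) together with the non-level-preserving planar isotopies to level-preserving position. Your write-up simply supplies the detail the paper leaves implicit (Morse position for fullness, the generic one-parameter family and codimension-one event analysis for faithfulness, and the detour-move lemma handling the virtual crossings), so it can be read as a fleshed-out version of the paper's argument rather than a different one.
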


\begin{prop}[The oriented virtual Turaev moves]
\label{prop5}
The category of oriented virtual tangles up to classical and virtual Reidemeister move is equivalent to the category of oriented tangles generated by the oriented set from Def. \ref{def1} up to the moves from Fig. \ref{fig8}, called the oriented virtual Turaev moves.
\begin{figure}[!h]
\centering
\includegraphics[scale=0.07]{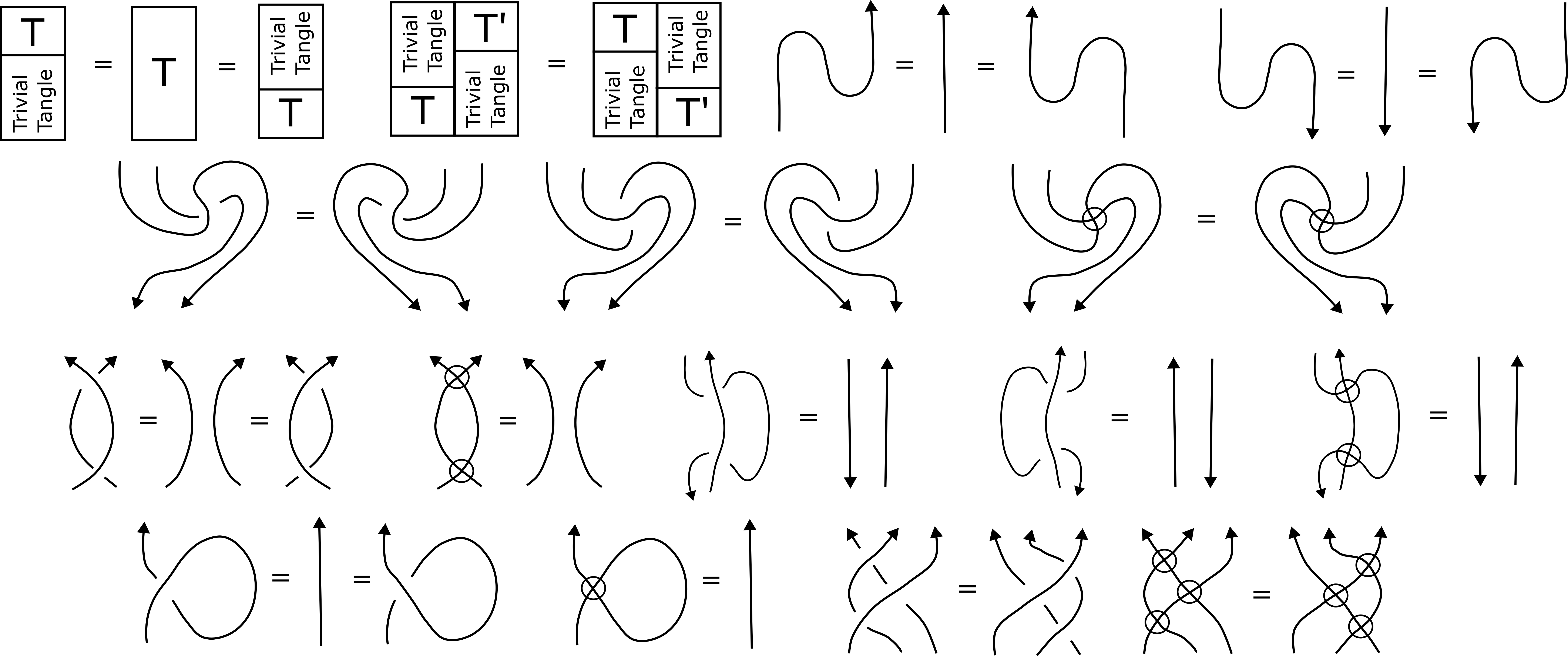}
\caption{The oriented Turaev moves.}
\label{fig8}
\end{figure}
\end{prop}

\begin{remark}
In the case of classical tangles, the Turaev moves were introduced in \cite{turaevtanglemoves}, but as far as the author knows no reference exists for the Turaev moves in a virtual tangle setting.
\end{remark}

\begin{proof}[Proof of Props. \ref{prop4} and \ref{prop5}]
The Turaev moves are simply taking a generating set of Reidemeister moves and adapting them so they are compatible with level-preserving isotopies. The first group of moves covers the planar isotopies that are not level-preserving, the second group represents the classical Reidemeister moves, and the final group covers the virtual Reidemeister moves.
\end{proof}

\begin{remark}\label{homolopairing}Since virtual tangles can be realized as (classical) tangles in a thickened surface with boundary $F$, we can look at the generic projection of the tangle onto $F$, in which case each component is a homology cycle and any two components intersect transversally. This means we can use parts of the tangle as inputs in the pairing of two transverse homology cycles $\alpha, \beta\in H_1(F, \partial F)$. This pairing, denoted by $\alpha\cdot \beta$, simply looks at whether at points where $\alpha, \beta$ intersect on $F$ the orientation $(\alpha, \beta)$ matches the orientation induced by $F$, and assigns $\pm1$ to every crossing depending whether the orientation matches ($+$) or does not match ($-$) the induced orientation.

\begin{figure}[!h]
\centering
\includegraphics[scale=0.12]{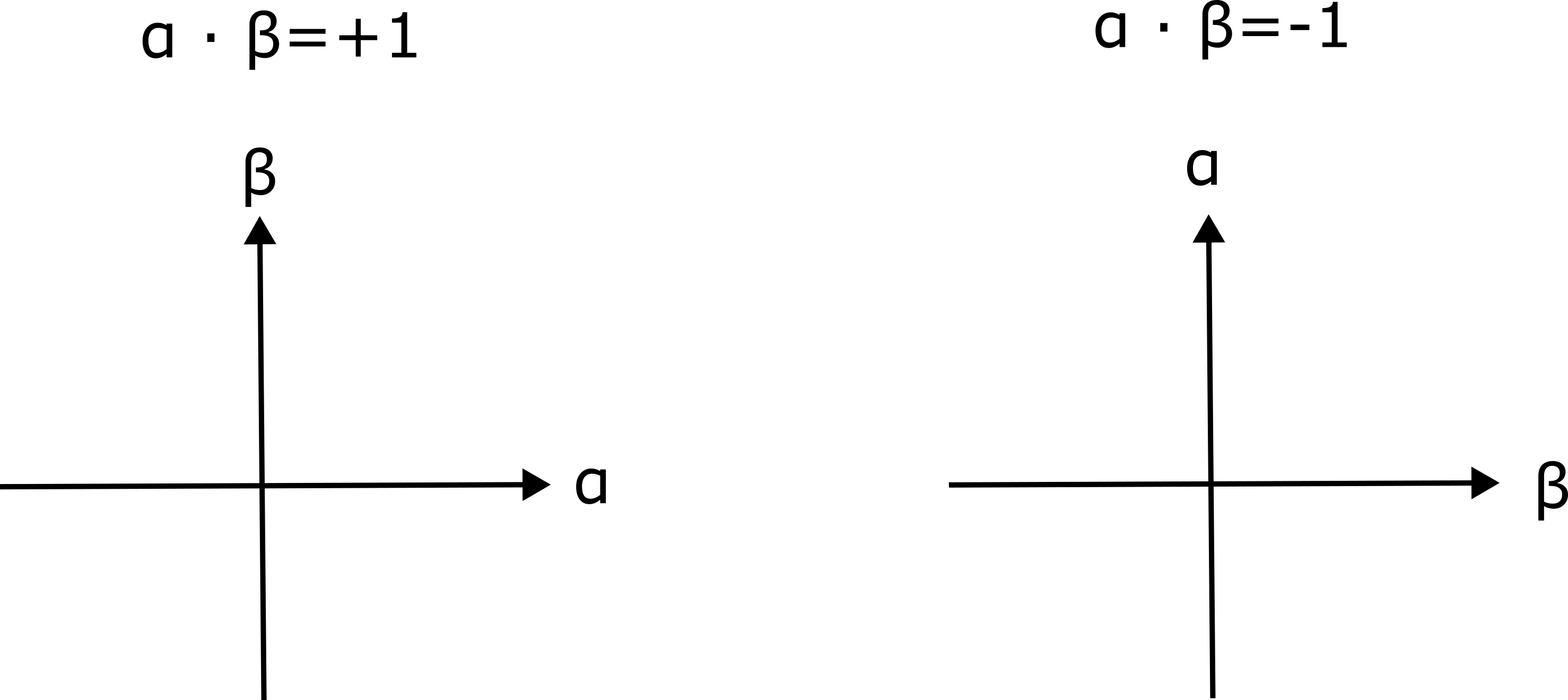}
\caption{The homology pairing $\alpha\cdot \beta$.}
\label{homolopairingfig}
\end{figure}
\end{remark}
\subsection{Vassiliev invariants}

\begin{defin}
Given a virtual knot invariant $\nu$, we can extend it to knots with double points by defining $\nu(D_s)=\nu(D_+)-\nu(D_-)$, where $D_s$ is the knot diagram with the singular point, $D_+$ is the diagram where the singular point was resolved as a positive crossing, and $D_-$ the diagram where the singular point was resolved as a negative crossing. If $D$ has more than one double point, extend the definition recursively, resolving every double point of $D$ in either the positive or negative way, keeping track of how many negative resolutions we chose to account for the sign in front of the resolution.

We say $\nu$ is a finite-type invariant or Vassiliev invariant (FTI for short) of order $\leq n$ if it vanishes identically on any knot with more than $n$ double points. Note that the recursive definition implies that if a FTI vanishes on all diagrams with $n+1$ double points, it will also vanish on all diagrams with more than $n+1$ double points.
\end{defin}

\begin{remark}
Singular virtual knots are virtual knot diagrams with an extra type of crossing decoration (the singular points), and inherit a couple of additional Reidemeister-style moves called the rigid vertex isotopy moves. We won't work with singular virtual knots much, so we send the interested reader to section 7 of \cite{virtualknottheory}.
\end{remark}

While Vassiliev invariants have been completely classified for classical knots, very little is known about their overall structure for virtual knots. We have plenty of examples of order one invariants, mainly coming from index polynomials of various kinds, and power series expansions of invariants have produced higher order Vassiliev invariants (see again \cite{virtualknottheory}); but compared to the finite-dimensional invariant spaces of the classical case, much is still to be explored.

\subsection{The Affine Index Polynomial and its generalization}

The Affine Index Polynomial (AIP for short) was first introduced in \cite{affineindexpolynomial}, generalized to compatible virtual links in \cite{virtualknotcobordismaffineindex}, and finally further extended by the author in \cite{multivariableaip} to a multi-variable polynomial. We will focus on the latter in this background section.

\begin{defin}
Let $L$ be an oriented link with $n$ ordered components $L_1,\ldots, L_n$. From the starting point used to define the orientation on each component, propagate a bilabel along the strands by starting it at some arbitrary value $(a_{i1}, a_{i2})$ and using the following rules to determine how the bilabel changes. If the link is not compatible (i.e. the algebraic intersection number of some of the components is nonzero) the label will not neatly wrap-up, but we don't consider that an issue.

\begin{figure}[!h]
\centering
\includegraphics[scale=0.1]{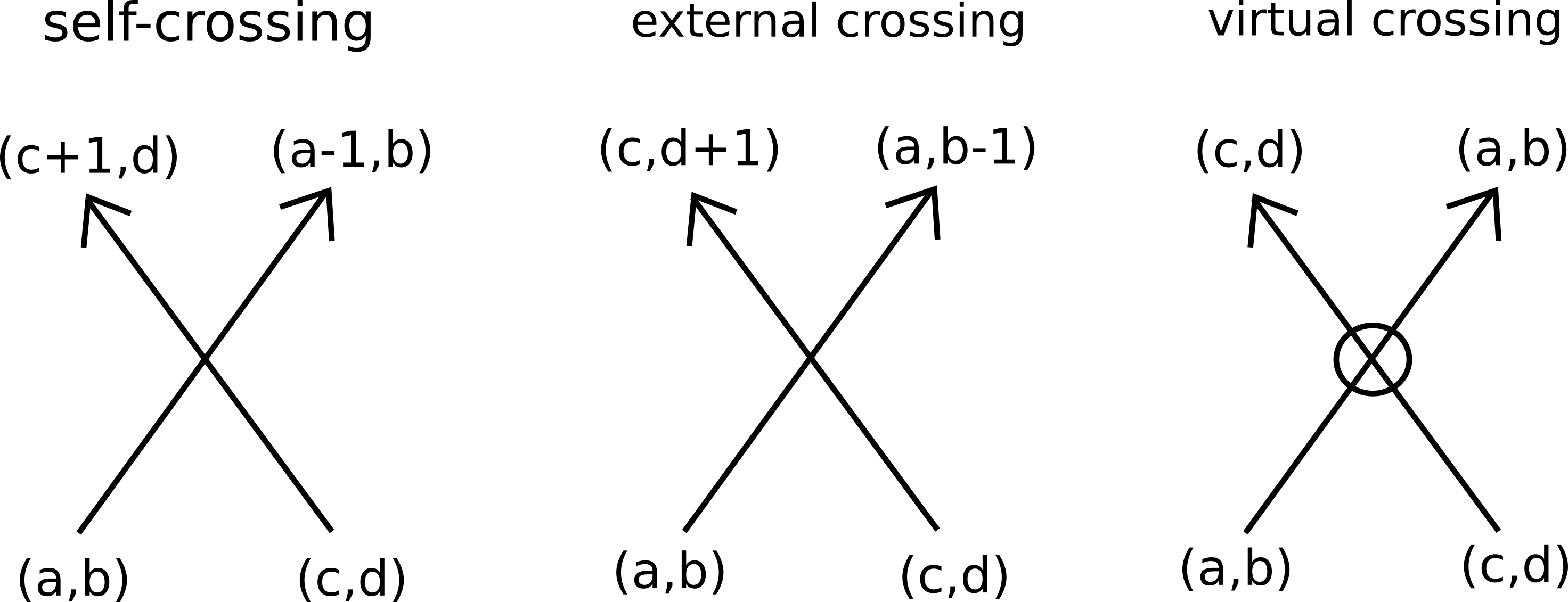}
\caption{The rules to determine the affine bilabeling.}
\label{fig9}
\end{figure}
\end{defin}

\begin{defin}
Given an oriented, ordered virtual link $L$, assume a classical crossing $d$ looks like either of the ones in Fig. \ref{fig9}. Assign to $d$ the weight $W_+(d)=(a+b)-(c+d)-1$ if $d$ is positive, and $W_-(d)=(c+d)-(a+b)+1$ if $d$ is negative (we don't distinguish between self-crossings and mixed ones). Now define the multi-variable AIP as $$p_L(t_1, \ldots, t_n)=\sum_{i=1}^n\ \sum_{c\in L_i\text{ over }L_j}sgn(c)(t_i^{W_{\pm}(c)}-1)$$ where the sum is over all crossings $c$ where component $L_i$ is the overstrand (including self-crossings of $L_i$).
\end{defin}

\begin{remark}
A couple of easy mnemonics to remember how to find the weight: you collapse the bilabel $(a,b)\mapsto a+b$, then take the incoming overstrand label minus the outgoing understrand label. Or, it's the incoming overstrand label minus the incoming understrand label minus the sign of the crossing.
\end{remark}

\begin{remark}
If we replace all the variables $t_i$ with the one variable $t$ and work with a compatible link, the multi-variable AIP reduces to the invariant described in \cite{virtualknotcobordismaffineindex}, whereas for a knot we recover the original AIP from \cite{affineindexpolynomial}.
\end{remark}

\begin{prop}[\cite{multivariableaip}]
The multi-variable AIP is an invariant of the pair $(L,C)$, where $L$ is a link and $C$ is the affine bilabeling described above. It is also an order one Vassiliev invariants for virtual knots and links.
\end{prop}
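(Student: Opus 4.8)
The plan is to prove two separate claims: invariance of the multi-variable AIP under the Reidemeister moves, and the order-one Vassiliev property. I will treat invariance first. Since the polynomial $p_L$ is a sum over classical crossings of $\mathrm{sgn}(c)(t_i^{W_\pm(c)}-1)$, and virtual crossings contribute nothing, it suffices to check that the total sum is unchanged under each of the classical Reidemeister moves, the virtual Reidemeister moves, and the mixed move. The virtual moves are immediate because they involve only virtual crossings and do not alter the affine bilabeling on the classical crossings involved. For the classical moves I would argue crossing-by-crossing using the bilabeling rules from Fig.~\ref{fig9}, tracking how the labels $(a,b)$ propagate through the tangle fragment on each side of the move.

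\begin{proof}[Proof sketch]
\textbf{Reidemeister I.} A kink creates one classical self-crossing. Following the bilabel through the loop, the incoming overstrand and outgoing understrand labels coincide after collapsing, forcing $W_\pm = 0$ by the mnemonic (incoming overstrand minus outgoing understrand). Hence the crossing contributes $\mathrm{sgn}(c)(t_i^0 - 1) = 0$, so the polynomial is unaffected. \textbf{Reidemeister II.} The two crossings created have opposite signs, and by propagating the bilabel one checks that they carry equal weights $W$; the contributions $(+1)(t_i^W-1)$ and $(-1)(t_i^W-1)$ cancel exactly. \textbf{Reidemeister III.} Here I would verify that the multiset of (sign, weight) pairs of the three crossings is preserved by the move. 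The key is that the bilabeling rules are chosen precisely so that the collapsed label $a+b$ transforms affinely, so the three weights on one side match the three weights on the other side after the strands are rearranged, term by term. \textbf{Mixed move and virtual moves} leave the classical crossings and their labels untouched, hence leave $p_L$ invariant.
\end{proof}

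For the second statement I would invoke the recursive resolution formula $\nu(D_s) = \nu(D_+) - \nu(D_-)$ and show that $p_L$ vanishes on any diagram with two or more double points. It suffices to show it vanishes on diagrams with exactly two double points, since by the note in the definition of FTI the higher cases follow. Given two double points, resolve one of them: the difference $p_{D_+} - p_{D_-}$ changes only the sign and weight of the single crossing being resolved, while the contributions of all other crossings (including the second double point, once resolved) are identical in $D_+$ and $D_-$ and therefore cancel. Writing out $\mathrm{sgn}(c)(t_i^{W_+}-1)$ for the positive resolution and the negative-resolution analogue, and using that flipping a crossing sends $W_+ \mapsto W_-$ with an accompanying sign change, one finds the net dependence on the resolved double point cancels across the two resolutions of the second double point, giving $p_{D_{ss}} = 0$.

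The main obstacle I anticipate is the Reidemeister III verification: the bookkeeping of how all six label-slots transform under the move is delicate, and it is exactly the place where the specific form $W_+ = (a+b)-(c+d)-1$ must be used rather than any other affine expression. I would organize this by fixing the incoming labels on the three strands as free parameters, computing the outgoing labels via Fig.~\ref{fig9} on both sides of the move, and then matching weights pair-by-pair, since only the correct definition makes the before/after weight multisets agree. A secondary subtlety is confirming that the invariant is genuinely of the pair $(L,C)$ and that changing the arbitrary basepoint/initial value $(a_{i1},a_{i2})$ shifts every label on component $L_i$ by a constant, under which each weight $W_\pm$ is unchanged because it is a difference of labels on the same component; this makes $p_L$ independent of the initial choice, as claimed.
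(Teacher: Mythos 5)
Your overall strategy is the same as the paper's (the paper proves the tangle analogue, Prop.~\ref{prop1}, by exactly this route: a crossing-by-crossing check of the Reidemeister moves, then an alternating-sum cancellation for diagrams with two double points), and the R1/R2/R3 and virtual-move parts of your sketch are sound. However, your closing ``secondary subtlety'' is wrong, and it reflects a misreading of the statement. It is not true that each weight $W_\pm$ is a difference of labels on the same component: at a \emph{mixed} crossing the weight $(a+b)-(c+d)\mp 1$ combines labels from two different components, so shifting the initial value $(a_{i1},a_{i2})$ of one component shifts the weights of every mixed crossing involving it, and $p_L$ is \emph{not} independent of the initial choices. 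That is precisely why the proposition is phrased as an invariant of the pair $(L,C)$ rather than of $L$ alone; the labeling $C$ is part of the data being preserved. The paper's Example~\ref{ex1} makes this visible: the starting labels $c_1,c_2$ survive in the exponents of the final polynomial. (Your claim is correct for knots, where all crossings are self-crossings, but not for links.)

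On the Vassiliev half there are two gaps. First, your cancellation argument hinges on the assertion that ``the contributions of all other crossings (including the second double point, once resolved) are identical in $D_+$ and $D_-$.'' That assertion is true, but only because of a fact you never state: the affine (bi)labeling depends solely on the underlying \emph{flat} diagram, so switching the over/under information at a crossing changes no labels anywhere. The paper invokes this explicitly (``the labeling will be the same in all four terms as it is really only dependent on the flat knot underlying the diagram''); without it the step is unjustified. Second, vanishing on all diagrams with two double points only gives order $<2$, i.e.\ order $\leq 1$. To conclude the invariant is of order \emph{one} you must also exhibit a diagram with a single double point on which the weighted resolution is nonzero; the paper does this with an explicit example (cf.\ Fig.~\ref{vassilievtangleexample} in the tangle proof). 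Your proposal omits this entirely, so as written it proves a weaker statement.
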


\section{The tangle generalization}
\subsection{Definition and properties}
\label{maiptangles}
\begin{defin}
Let $T$ be a virtual tangle with $n$ ordered components $T_1, \ldots, T_n$. Pick an orientation for each component, and a starting point for each closed component. Color the strands of the tangle by starting each component $T_i$ with the label $c_i$, and propagating the labels as if they were an intersection index (see Fig. \ref{affinelabeling}). We call this the affine labeling of the tangle. Let $\delta_i$ be the difference between the final label of the component and its initial label; we call this the index difference of component $i$

\begin{figure}[!h]
\centering
\includegraphics[scale=0.1]{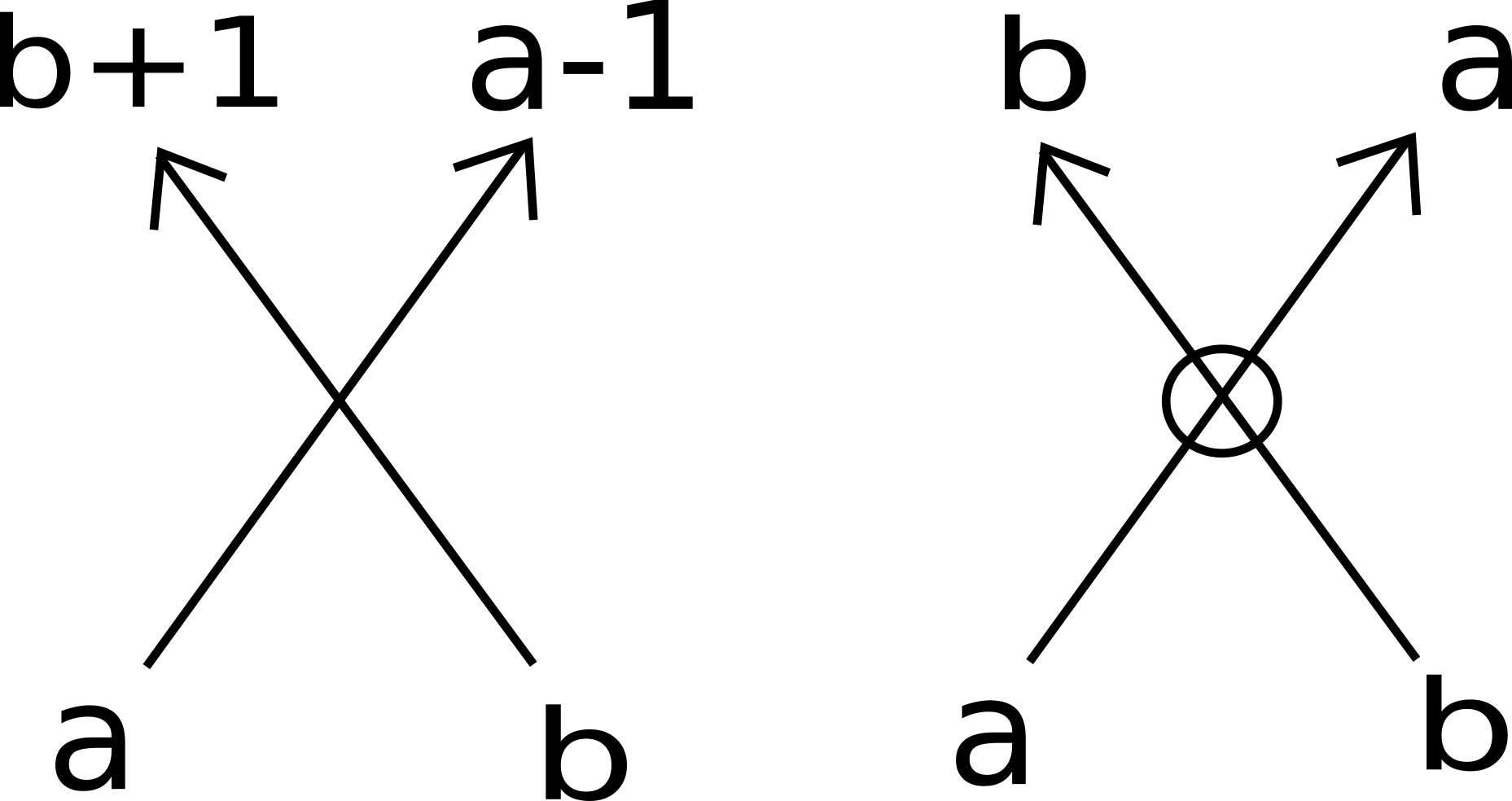}
\caption{The rules to determine the affine labeling of our tangle.}
\label{affinelabeling}
\end{figure}

We then assign to every classical crossing $d$ the weight $W(d)=a-b-sgn(c)$ where $a$ is the label of the overstrand and $b$ is the label of the understand. Finally, we build a polynomial in the variables $t_1, \ldots, t_n$ out of these weights via the formula
$$p_T(t_1, \ldots, t_n)=\sum_{i=1}^n\ \sum_{d\in T_i\text{ over }T_j}sgn(c)t_i^{\delta_j}(t_i^{W(d)}-1)$$ where the index of the variable $t_i$ matches the index of the overstrand of the crossing, $\delta_j$ is the index difference of the understrand, and the sum is taken over all classical crossings where $T_i$ is an overstrand (including the case $i=j$). The resulting expression is called the Multi-Variable Affine Index Polynomial (or MAIP for short) for tangles.
\end{defin}

\begin{remark}
Compared to \cite{multivariableaip} we chose to condense the bilabel to a single label. Ultimately the label gets collapsed to compute the invariant anyway (and doesn't even need to be defined when using the intersection index method described below), so its only utility was to distinguish the change in index produced by the self-crossings from the one produced by the mixed crossings, which is of no use in the current paper. The rest of the results below hold if we replace $c_i$ with $(c_{i1}, c_{i2})$, use the rules from \cite{multivariableaip}, and then define $c_i=c_{i1}+c_{i2}$.

The other major change is the inclusion of the terms $\delta_j$ in the weights; for knots and compatible links we have $\delta_j=0$, so the definition matches the one in \cite{multivariableaip}, whereas long components and non-compatible links have a slight different definition. The reason for this adjustment will be explained in Prop. \ref{prop2}
\end{remark}

\begin{example}
\label{ex1}
\begin{figure}[!h]
\centering
\includegraphics[scale=0.17]{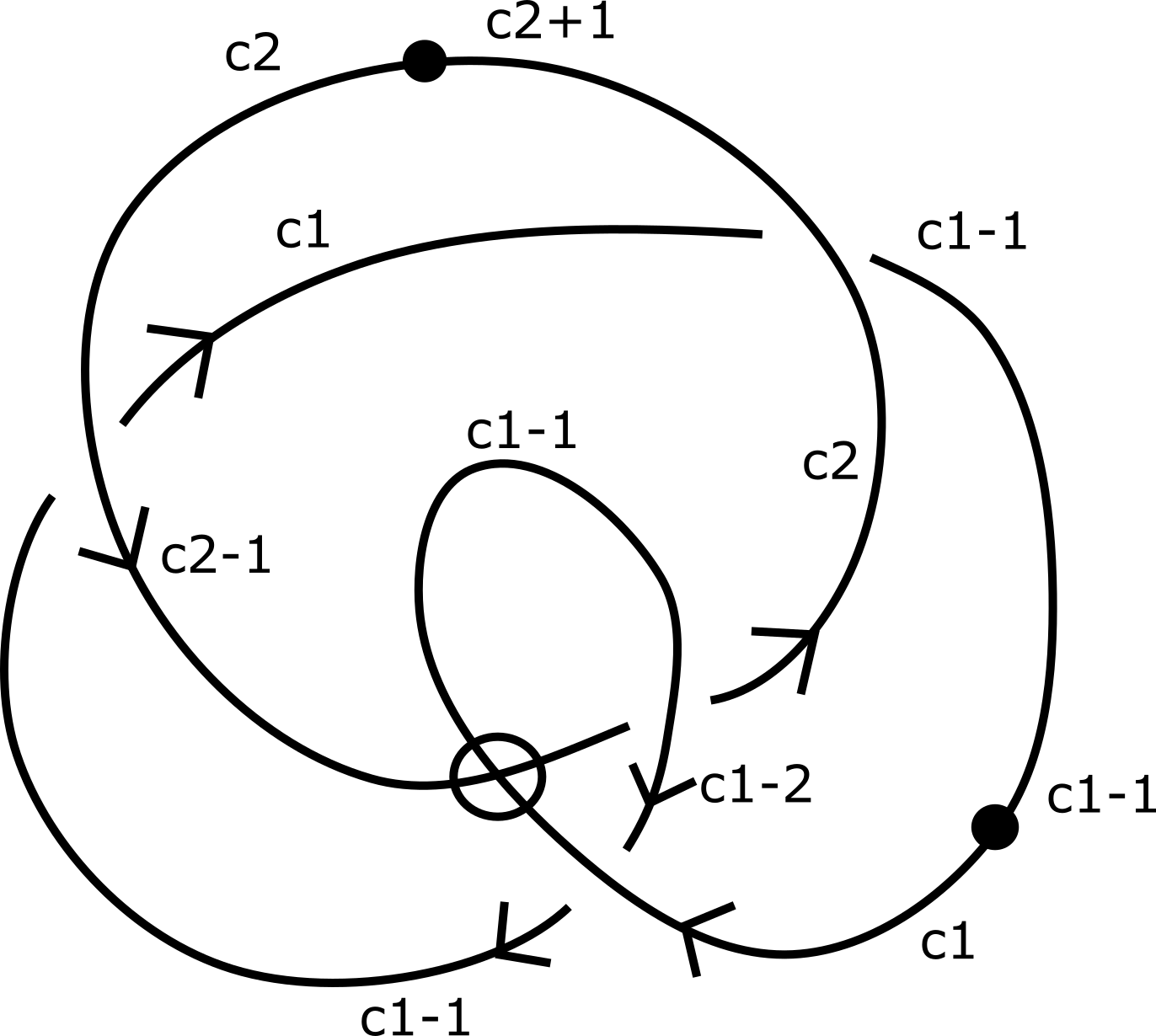}
\caption{An example of a tangle with only closed components.}
\label{example1}
\end{figure}

The tangle (really, non-compatible link) from Fig. \ref{example1} was already discussed in \cite{multivariableaip} in which it was colored with a bilabel. With our updated coloring and rules, we have $\delta_1=-1$ and $\delta_2=1$. Looking at the crossings, there are two crossings in which component two is the overstrand, contributing $$t_2^{\delta_1}(t_2^{c_2-c_1}-1)-t_2^{\delta_1}(t_2^{c_2-(c_1-1)}-1)$$
whereas component one is the overstrand for the two other crossings, contributing $$t_1^{\delta_1}(t_1^{c_1-(c_1-1)}-1)+t_1^{\delta_2}(t_1^{c_1-1-c_2}-1)$$
The overall contributions simplify to $$t_2^{c_2-c_1-1}-t_2^{c_2-c_1}+1-t_1^{-1}+t_1^{c_1-c_2}-t_1$$
\end{example}

\begin{figure}[!h]
\centering
\includegraphics[scale=0.15]{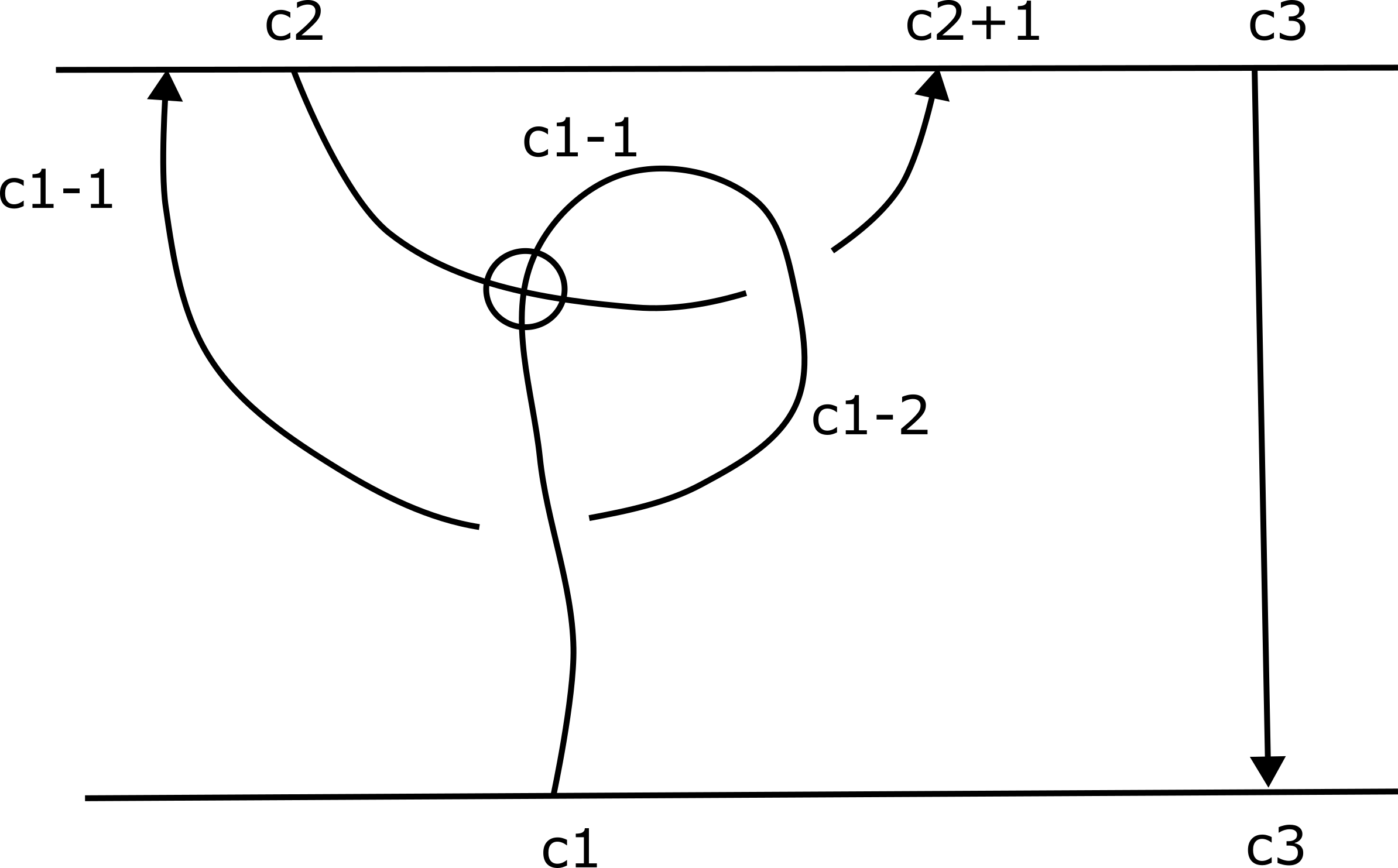}
\caption{Another example, with three separate components.}
\label{example2}
\end{figure}

\begin{example}
\label{ex2}
The tangle from Fig. \ref{example2} has $\delta_1=-1$, $\delta_2=1$, and $\delta_3=0$. The polynomial is $$t_1^{\delta_1}(t_1^{c_1-(c_1-1)}-1)+t_1^{\delta_2}(t_1^{(c_1-1)-(c_2+1)}-1)$$ which simplifies to
$$t_1^0-t_1^{-1}+t_1^{c_1-c_2-2}-t_1$$
\end{example}

\begin{example}
\label{ex3}
The tangle from Fig. \ref{example3} has (coincidentally) $\delta_1=-1$, $\delta_2=1$, and $\delta_3=0$. The polynomial is $$t_1^{\delta_3}(t_1^{c_1-(c_3+1)}-1)-t_2^{\delta_3}(t_2^{c_2-c_3}-1)$$ which simplifies to
$$t_1^{c_1-c_3-1}-1-t_2^{c_3-c_2}+1=t_1^{c_1-c_3-1}-t_2^{c_2-c_3}$$
\end{example}

\begin{figure}[!h]
\centering
\includegraphics[scale=0.15]{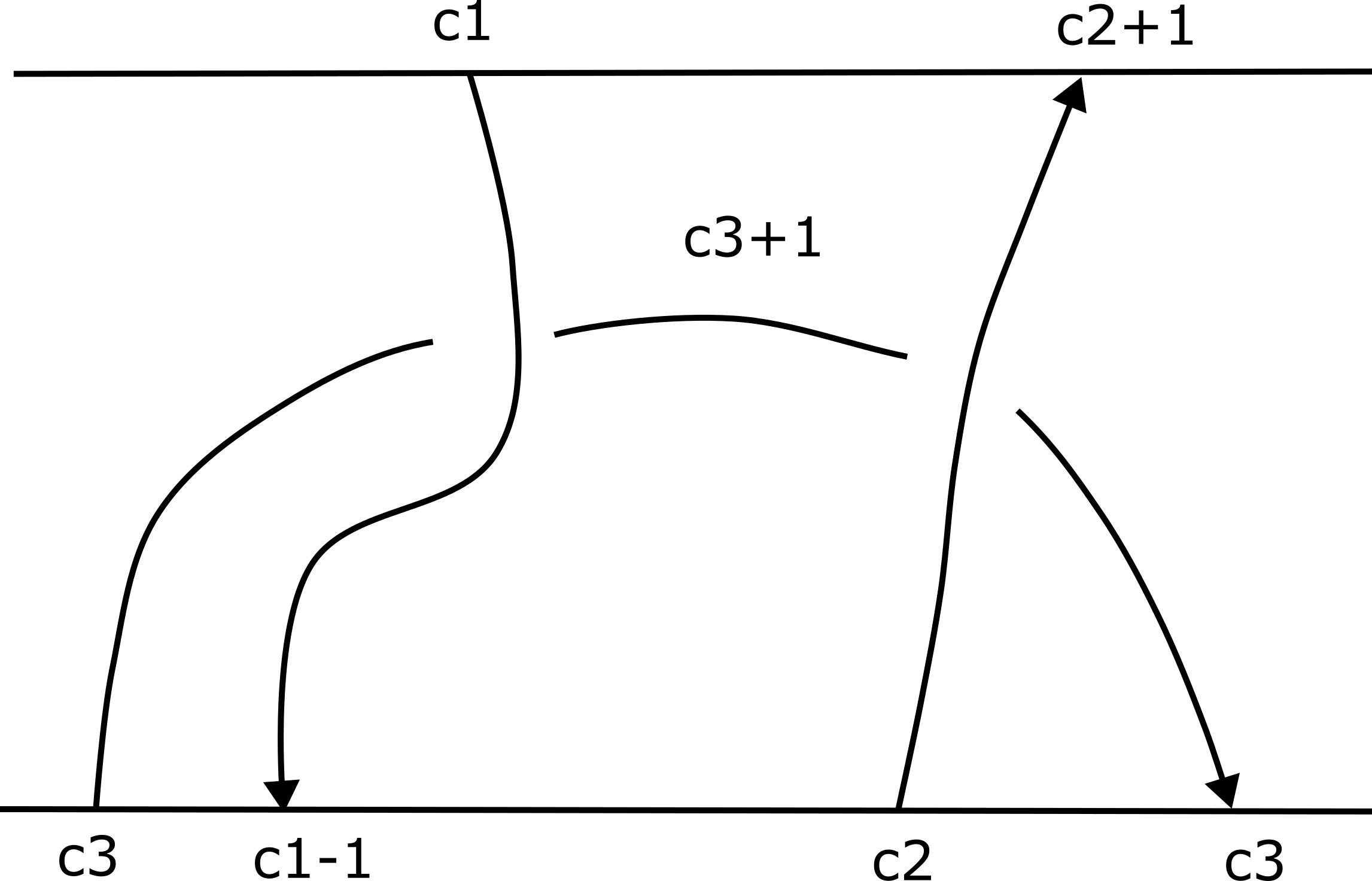}
\caption{One final example.}
\label{example3}
\end{figure}

\begin{prop}
\label{prop1}
The MAIP is an invariant of the pair $(T, C)$, where $T$ is a virtual tangle and $C$ is an affine labeling described above. It generalizes the invariants for knots and tangles from \cite{virtualknotcobordismaffineindex}. Moreover, it is an order one Vassiliev invariant for virtual tangles.
\end{prop}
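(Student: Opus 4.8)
The plan is to establish the three assertions in turn: Reidemeister invariance (so that $p_T$ depends only on the pair $(T,C)$), the reduction to the earlier invariants, and the finite-type property. Throughout I would work with the virtual Turaev moves of Props. \ref{prop4} and \ref{prop5}: since they generate all classical and virtual Reidemeister moves performed away from the endpoints, it suffices to check that $p_T$ is unchanged by each of them. The one global ingredient, the index differences $\delta_i$, I would dispose of first by observing that each $\delta_i$ is the net change of the affine label along $T_i$, i.e. a signed count of crossings on $T_i$; as in Remark \ref{homolopairing} this is the homological intersection data of the component, hence unaffected by any Reidemeister move (and independent of the starting point and of the initial value $c_i$). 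Consequently, in every local computation the factors $t_i^{\delta_j}$ are genuine constants.

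The heart of the invariance check is the behaviour of the weights. I would first record the labelling rule in the form it is actually used: passing through a crossing the understrand label increases by $sgn(c)$ and the overstrand label decreases by $sgn(c)$ (this is the unique rule forcing the label to close up on a knot, and it is what reconciles $W(d)=a-b-sgn(c)$ with the two mnemonics). With this in hand the four families of moves become routine. Virtual and mixed moves neither create classical crossings nor alter any label (labels pass through virtual crossings unchanged), so $p_T$ is literally the same expression. For the classical moves I would check: (R1) the curl crossing has incoming over-label $\ell$ and incoming under-label $\ell-sgn(c)$, so $W=0$ and its term $sgn(c)\,t_i^{\delta_i}(t_i^{0}-1)$ vanishes; (R2) the two new crossings share the same over- and under-components, have opposite signs, and a one-line propagation shows their weights are equal, so the two terms $\pm\,t_i^{\delta_j}(t_i^{W}-1)$ cancel; (R3) each of the three crossings survives with its sign, its over/under components, and (being an intersection index) its weight $a-b$ unchanged, so every term is individually preserved. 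This mirrors the link computation of \cite{multivariableaip}, now with the constant factors $t_i^{\delta_j}$ carried along. I expect the bookkeeping in the R3 case, namely tracking the arc relabellings to confirm that each $W$ is preserved, to be the most delicate part of invariance, though it is exactly the delicacy already handled in the classical and link settings.

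For the second assertion I would simply specialise. For a knot or a compatible link every $\delta_i$ vanishes, so $t_i^{\delta_j}=1$ and the formula collapses to $\sum_d sgn(d)(t_i^{W(d)}-1)$; setting all $t_i=t$ and collapsing the bilabel as in the preceding remark (via $c_i=c_{i1}+c_{i2}$) returns the single-variable polynomial of \cite{virtualknotcobordismaffineindex} and, for a knot, the original AIP of \cite{affineindexpolynomial}. This is a direct comparison of formulas with no new content.

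The finite-type statement is where the real argument lies, and it becomes clean once one makes the following observation. Resolving a double point replaces a crossing by $D_+$ and $D_-$, which (with orientations fixed) differ by simultaneously switching over/under and flipping the sign; under exactly this operation the per-strand label changes (namely $-sgn$ on the over-passage and $+sgn$ on the under-passage) are preserved, so \emph{every arc label, and hence every $\delta_i$, is identical in $D_+$ and $D_-$}. Consequently $p_{D_+}$ and $p_{D_-}$ agree term by term except at the resolved crossing itself, and $p_{D_s}=p_{D_+}-p_{D_-}$ reduces to the single local contribution of that crossing. Iterating, if $D$ carries two double points $d_1,d_2$ then switching $d_2$ changes neither the labels nor the $\delta_i$ feeding into $d_1$, so the local contribution of $d_1$ is the same in the $d_2=+$ and $d_2=-$ resolutions; the alternating sum $p_{D_{++}}-p_{D_{+-}}-p_{D_{-+}}+p_{D_{--}}$ therefore telescopes to zero, giving order $\le 1$. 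Order exactly one follows since the single-crossing contribution is generically nonzero, as the worked examples already indicate. The main obstacle here is conceptual rather than computational: isolating the switch-invariance of the labelling; once that is in place the vanishing of the second difference is automatic.
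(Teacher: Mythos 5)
Your proposal is correct and follows essentially the same route as the paper: invariance is checked move by move (kink weight zero under R1, cancellation of the two equal-weight opposite-sign crossings under R2, term-by-term preservation under R3, triviality for the virtual moves, with the $t_i^{\delta_j}$ factors carried along as constants), the generalization claim follows by specializing to $\delta_i=0$, and the order $\le 1$ bound rests on exactly the paper's key observation that the affine labeling depends only on the underlying flat diagram, so the alternating sum over resolutions of two double points cancels term by term. The only point where the paper is more explicit is the order-exactly-one claim, where it exhibits a concrete singular tangle with nonzero resolution (Fig. \ref{vassilievtangleexample}) rather than appealing to the contribution being ``generically nonzero.''
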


\begin{proof}[Proof of \ref{prop1}]
To show that the MAIP is an invariant we need to check how it behaves under Reidemeister moves. In the case of a Reidemeister move one, the crossing has to be a self-crossing of component $i$, and the weights on either side of the crossing are the same. So the contribution to the invariant is $\pm t_i^{\delta_i}(t_i^0-1)=0$.

For Reidemeister move two, we will focus on the case where the two strands belong to different components. The overstrand at both crossings belongs to the same component (thus so does the understrand), and the weights are the same at both crossings ($b-a-1$ for the first R2 in Fig. \ref{R1R2affine}, $b-a+1$ for the other), so the overall contribution is $+t_i^{\delta_j}(t_i^w-1)-t_i^{\delta_j}(t_i^w-1)=0$, showing invariance under this move as well. The same argument works when both strands belong to the same component, simply replacing $\delta_j$ with $\delta_i$.

\begin{figure}[!h]
\centering
\includegraphics[scale=0.1]{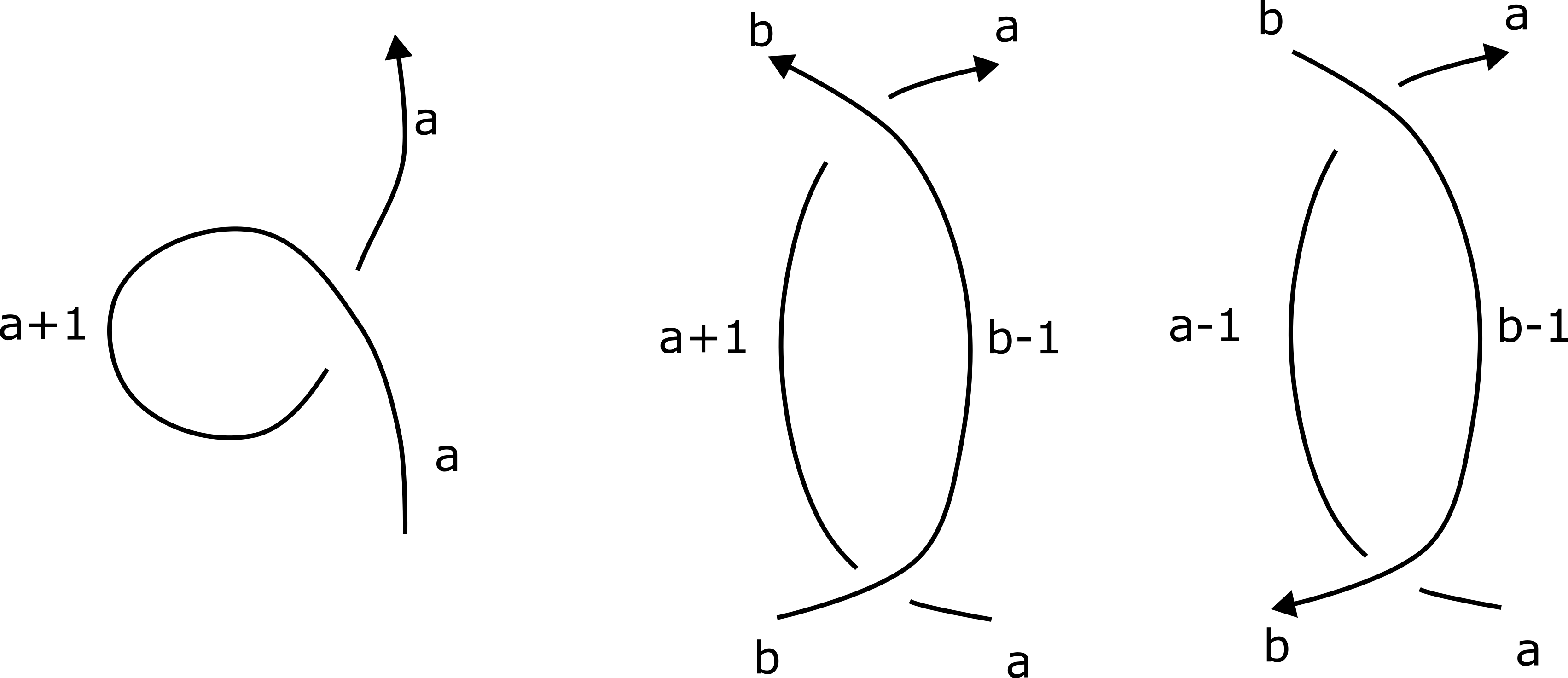}
\caption{The Reidemeister moves one and two and their labeling.}
\label{R1R2affine}
\end{figure}

The mixed Reidemeister move is easily taken care of, with both pictures having one crossing of weight $w=b-c-1$, so the overall contribution is $+t_i^{\delta_j}(t_i^w-1)$ (for the case pictured). Again, setting $j=i$ yields the proof of the case when both strands belong to component $i$.

Finally, let's look at the case for Reidemeister move three when all three strands belong to different components. There is a correspondence between the crossings, easily seen by which strands meet at a crossing. The weights, overstrands, understands, and crossing signs are the same on both sides of the correspondence, so the contribution on both sides of the move is the same. If two or three of the strands belong to the same component the argument still holds by setting the appropriate indices equal to each other. This concludes the proof that the MAIP is an invariant.

\begin{figure}[!h]
\centering
\includegraphics[scale=0.08]{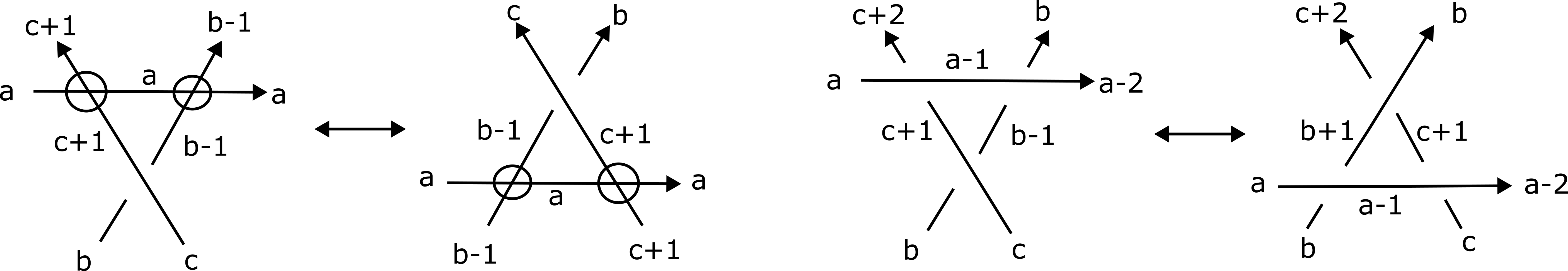}
\caption{Affine labeling for Reidemeister move three and the mixed one.}
\label{RmR3affine}
\end{figure}

For knots and compatible links described in \cite{virtualknotcobordismaffineindex} we have $\delta_i=0$ for all $i$, so our definitions reduce to the invariants described in said paper.

\begin{figure}[!h]
\centering
\includegraphics[scale=0.1]{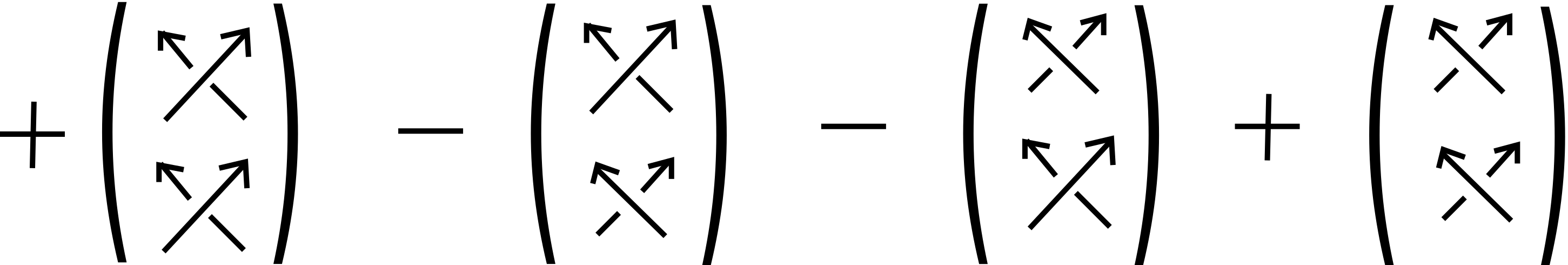}
\caption{A schematic representation of the resolution of the double points $d, d'$.}
\label{ftilessthan2}
\end{figure}

Showing it is a Vassiliev invariant is analogous to how it was proven in the previous paper by the author. Given two double points $d, d'$, we get the weighted resolution $D_{++}-D_{+-}-D_{-+}+D_{--}$, where $\pm\pm$ indicates how crossings $d,d'$ get respectively resolved, see Fig. \ref{ftilessthan2} for a schematic representation. But the rest of the diagram doesn't change, the labeling will be the same in all four terms as it is really only dependent on the flat knot underlying the diagram, and the contributions of crossings $d,d'$ don't influence each other (or the contribution of any other crossing) and get added together. So the contribution of a positive $d$ gets both added (in $D_{++}$) and subtracted (in $-D_{-+})$ for a net contribution of zero, and the same is true for the negative resolution of $d$ and both resolutions of $d'$. But every other crossing in $D$ appears twice with a $+$ sign and twice with a $-$ sign in the weighted sum, so their overall contribution is also zero, showing that, regardless of what $D$ looks like, if it has two double points that get resolved in the Vassiliev way they the MAIP of such combination is always zero. This shows that the MAIP is a Vassiliev invariant of order $< 2$.

It is very easy to provide an example of a tangle with one double point whose weighted resolution is nonzero, proving that the MAIP is a Vassiliev invariant of order $1$. In the example shown in Fig. \ref{vassilievtangleexample}, the two resolutions of the double point yield the polynomial $$t_1(t_1^{c_1-c_2-1}-1)-t_2^{-1}(t_2^{c_2-c_1+1}-1)=t_1^{c_1-c_2}-t_2^{c_2-c_1}+t_2^{-1}-t_1$$

\begin{figure}[!h]
\centering
\includegraphics[scale=0.1]{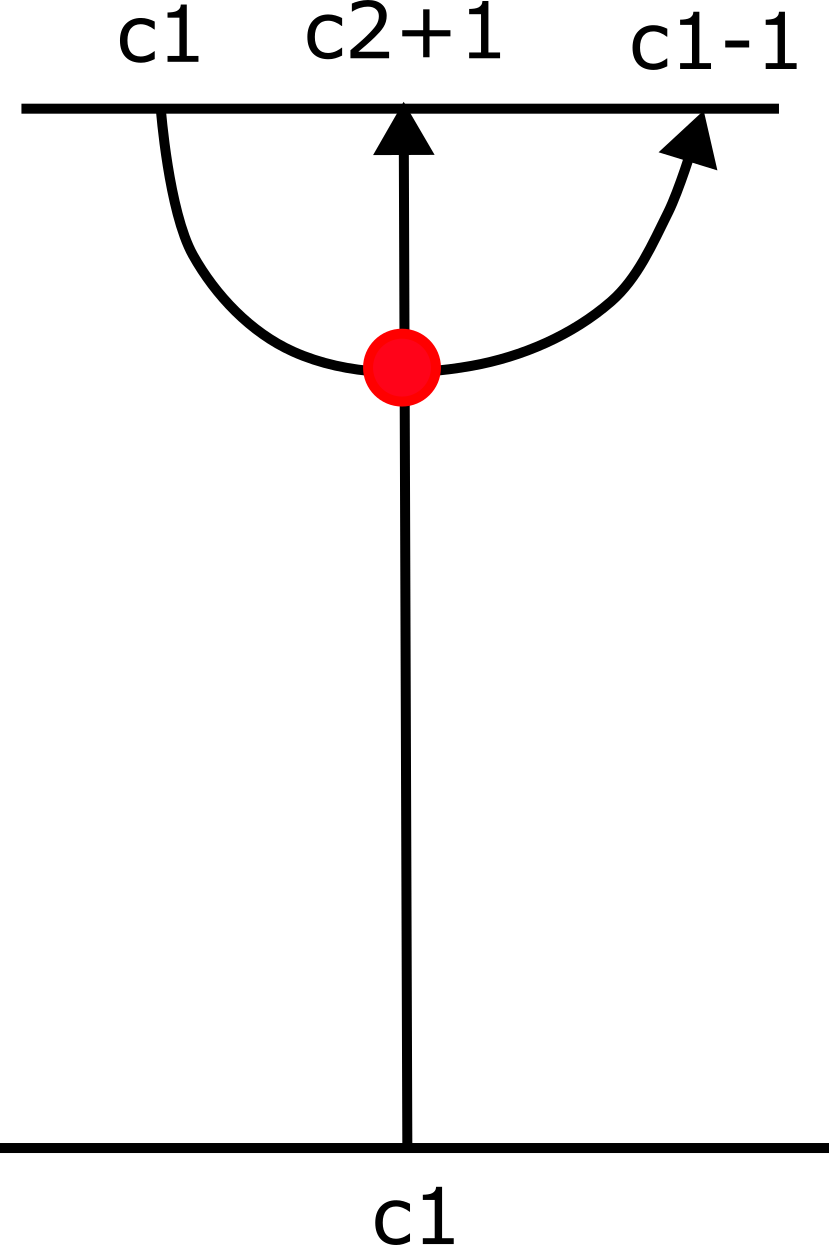}
\caption{A singular virtual tangle on which the MAIP is nonzero. The component starting with $c_1$ is component one, and the double point is represented as a red dot.}
\label{vassilievtangleexample}
\end{figure}
\end{proof}

\begin{prop}
\label{prop3}
The MAIP for tangles is well-behaved under tangle composition: $p_{T\otimes T'}=p_{T}+p_{T'}$ for any two tangles, and $p_{T\circ T'}=p_{T}+p_{T'}$ up to a couple of labeling adjustments:
\begin{itemize}
\item If the end of $T_i$ gets glued to the beginning of $T'_j$, the variables $t_j'$ should be replaced with $t_i$, and if the resulting glueing doesn't close a component the ending label of $T_i$ should be set equal to the starting label of $T'_j$: $c_i+\delta_i=c_j'$, and every occurrence of $c_j'$ in the exponents should be replaced with $c_i+\delta_i$.
\item If $T_{i1}$ glues to $T'_{j1}$, $T'_{j1}$ glues to $T_{i2}$, $T_{i2}$ glues to $T'_{j2}$,\ldots, up to $T_{in}$ glueing to $T'_{j_n}$ to overall form the component $T''_i$,  we must replace each of $\delta_{i1}, \ldots, \delta_{in}, \delta'_{j1}, \ldots, \delta'_{jn}$ with the value $$\delta''_{i}=\sum_{k=1}^n \delta_{ik}+\delta'_{jk}=\delta_{i1}+\delta'_{j1}+\delta_{i2}+\cdots +\delta'_{jn}$$
In particular, if we glue $T_i$ to $T'_j$, we need to substitute any occurrence of $\delta_i$ in $p_{T}$ and any occurrence of  $\delta'_j$ in $p_{T'}$ with $\delta_i+\delta'_j$.
\end{itemize}.
\end{prop}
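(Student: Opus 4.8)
The plan is to exploit the fact that both tangle operations are purely local: neither the tensor product nor the composition creates or destroys a classical crossing, and each surviving crossing retains its sign and its assignment of overstrand and understrand. Since $p_T$ is a sum indexed by the classical crossings of $T$, with each summand depending only on the sign of the crossing, the variable index of the overstrand, the two labels $a,b$ meeting at the crossing (through the weight $W(d)=a-b-sgn(c)$), and the index difference of the understrand's component, it suffices to track how each of these four data is affected by the operation and to check that the stated substitutions reproduce the correct value in the glued tangle. The sum over all crossings then splits as a sum over the crossings inherited from $T$ and a sum over those inherited from $T'$.

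For the tensor product the argument is immediate. Placing $T'$ to the right of $T$ leaves the two tangles disjoint, so no component is merged, every label is unchanged, and each index difference $\delta_i$ is unchanged. Each crossing therefore contributes exactly the same monomial as before, and $p_{T\otimes T'}=p_T+p_{T'}$ with no adjustment.

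For the composition the content is entirely in the relabeling forced by gluing the bottom ends of $T$ to the top ends of $T'$. First I would describe the merged components: each is an alternating concatenation $T_{i1},T'_{j1},T_{i2},\ldots$ of pieces of $T$ and $T'$, as in the statement. Fixing a starting point (an initial label) for such a component, the affine labeling propagates by the same local rules of Fig.~\ref{affinelabeling} across the glued ends; hence on every piece other than the first the labels differ from their original values in $T$ or $T'$ by a single additive constant, namely the amount needed to make the start of that piece agree with the end of the preceding one. This is exactly the prescription $c_i+\delta_i=c'_j$ together with the replacement of every $c'_j$ by $c_i+\delta_i$ in the exponents. I would then verify three points. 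First, weights transport correctly: for a self-crossing of a merged component the overstrand and understrand labels shift by the same constant, so $W(d)$ is literally unchanged; for a mixed crossing the two labels may shift by different constants, but the substituted formula evaluates $W(d)=a-b-sgn(c)$ on precisely the reconciled labels, so it returns the true weight in $T\circ T'$. Second, index differences add: the net change of label along a merged component telescopes across the concatenation, giving $\delta''_i=\sum_k(\delta_{ik}+\delta'_{jk})$; since the understrand of each crossing now lies in a merged component, the exponent $t_i^{\delta_j}$ must use this combined value, which is the substitution $\delta_j\mapsto\delta''$. Third, variables are identified: all pieces of one merged component carry a single variable, giving $t'_j\mapsto t_i$. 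With these three substitutions each crossing of $T$ and each crossing of $T'$ contributes, term by term, exactly its contribution in $T\circ T'$, and summing yields the claimed identity.

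The step I expect to be the main obstacle is the bookkeeping for components that close into a loop under the gluing, since there the merged component has no canonical starting label and, when the loop is not compatible, a nonzero combined index difference $\delta''\neq 0$. Here the label-matching rule is not imposed (as the statement notes), and one must instead invoke Proposition~\ref{prop1}: the MAIP is an invariant of the pair $(T,C)$, so any consistent choice of starting label for the closed component is admissible and the additivity holds for that choice, with the telescoping of index differences remaining valid even though the loop's labeling is multivalued. Checking that the mixed-crossing weights and the combined index differences stay consistent in this closed case, where the factor $t_i^{\delta_j}$ is genuinely doing work in contrast to the compatible situation of \cite{multivariableaip} in which every $\delta_j=0$, is the part that requires the most care.
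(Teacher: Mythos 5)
Your proposal is correct and follows essentially the same route as the paper's proof: tensor additivity comes from the disjointness of the two labelings, and composition is handled by direct bookkeeping showing that the prescribed substitutions of variables, starting labels ($c'_j\mapsto c_i+\delta_i$, chained along merged components), and index differences ($\delta\mapsto\delta''$, which telescope) transport each crossing's contribution in $p_T$ or $p_{T'}$ to its contribution in $p_{T\circ T'}$. Your write-up is in fact more detailed than the paper's rather terse argument --- in particular, the paper implicitly places the starting point of a merged closed component at the beginning of $T_{i1}$ and does not dwell on the closed-loop case you flag, whereas you make explicit both the weight-transport check and the appeal to Proposition~\ref{prop1} for the choice of labeling there.
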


\begin{remark}Some remarks about the above proposition:
\begin{itemize}
\item Compatibility under tangle addition means that we can decompose a knot as a sum of elementary tangles and recover the MAIP of the knot by adding the single pieces.
\item We suppressed the variables of the polynomial in the above for readability; clearly the ordering of the component of $T\otimes T'$ and the associated variables must be consistent on both sides of the equality $p_{T\otimes T'}=p_t+ p_{T'}$. We explicitly list these natural adjustments in the proof below. 
\item The author is a little underwhelmed by the necessity of the second condition, for it means we cannot hope to recover $p_{T\circ T'}$ from the simplified expressions for $p_T, p_{T'}$. At the same time, the $\delta_j$ are necessary to relate the weights to the intersection pairing of some special homology classes (see next section), so the author believes that the inconvenience over composition is compensated by the geometric interpretation of the weights.
\end{itemize}
\end{remark}

\begin{example}
\label{ex4}
The two tangles from Fig. \ref{example2} and \ref{example3} can be stacked on top of each other, and the result is pictured in Fig. \ref{compositionexample}. The bottom tangle $T$ has $\delta_1=-1$, $\delta_2=1$ and $\delta_3=0$, while the top tangle $T'$ has $\delta'_1=-1$, $\delta'_2=1$, and $\delta'_3=0$. 

The glueing rules from Prop. \ref{prop3} say we must set $d_3=c_1-1$, $c_3=d_3=c_1-1$, $c_2=d_1-1$ and $d_2=c_2+1=d_1$, replace $t'_3, t_3$ with $t_1$ and $t_2, t'_2$ with $t'_1$, and replace any of $\delta_2, \delta_1', \delta'_2$ with $\delta_2+\delta'_1+\delta'_2=1$ and any of $\delta_1, \delta_3, \delta'_3$ with $\delta_1+\delta_3+\delta'_3=-1$
If we take the two polynomials from Examples \ref{ex2} and \ref{ex3} and add them up we get

$$t_1^{\delta_1}(t_1^{c_1-(c_1-1)}-1)+t_1^{\delta_2}(t_1^{(c_1-1)-(c_2+1)}-1)+(t'_1)^{\delta'_3}((t'_1)^{d_1-(d_3+1)}-1)-(t'_2)^{\delta'_3}((t'_2)^{d_2-d_3}-1)$$ 
Applying the identifications prescribed by the proposition, this expression turns into
$$t_1^{-1}(t_1-1)+t_1(t_1^{(c_1-1)-d_1)}-1)+(t'_1)^{-1}((t_1')^{d_1-c_1}-1)-(t'_1)^{-1}((t_1')^{d_1-(c_1-1)}-1)$$
Simplification yields the polynomial
$$1-t_1^{-1}+t_1^{c_1-d_1}-t_1+(t'_1)^{d_1-c_1-1}-t_1'-(t_1')^{d_1-c_1}+t_1'$$

This matches the polynomial you would get if you tried computing the polynomial of the tangle resulting from the composition of $T, T'$.

\begin{figure}[!h]
\centering
\includegraphics[scale=0.1]{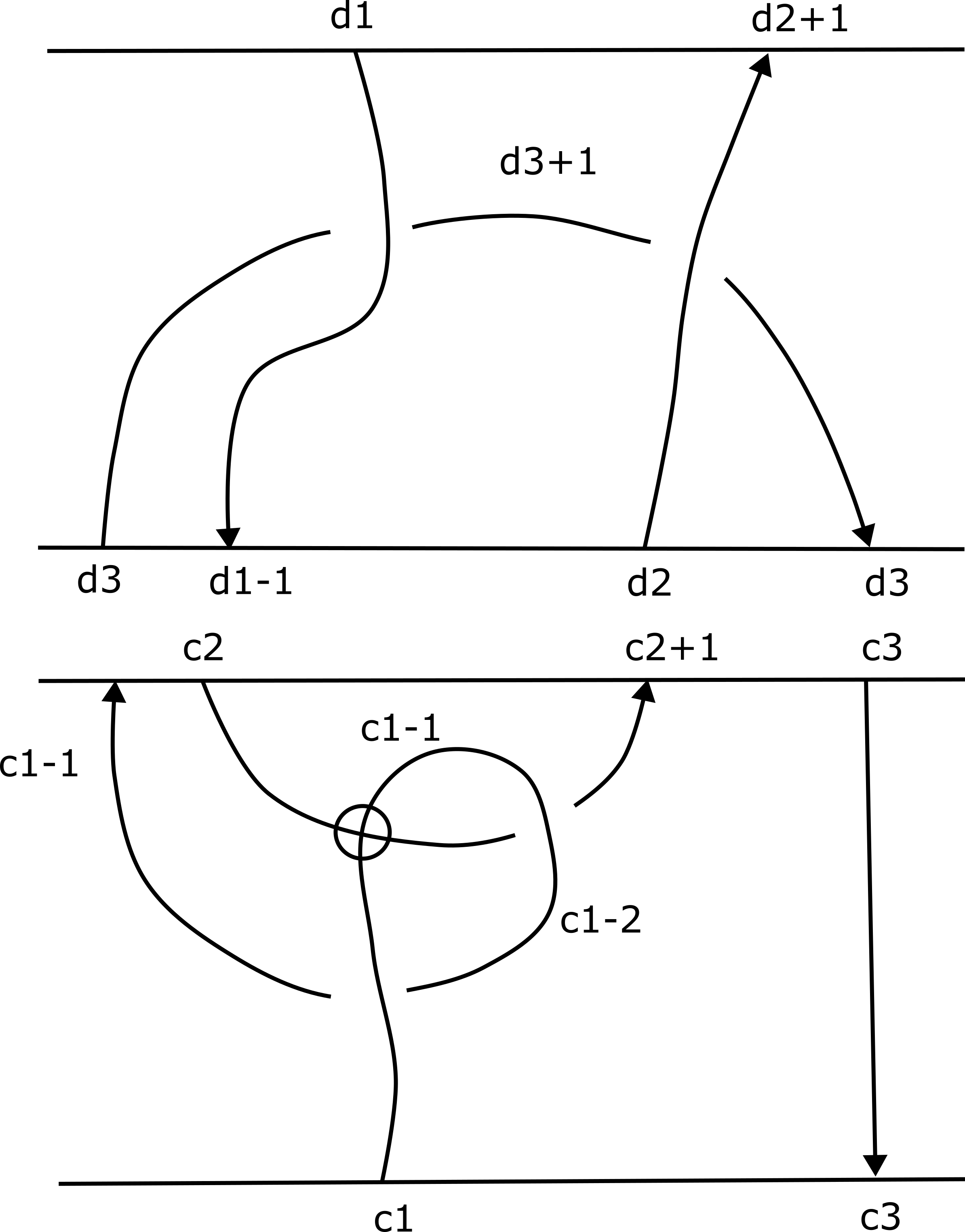}
\caption{The composition of the tangles from Fig. \ref{example2} and \ref{example3}.}
\label{compositionexample}
\end{figure}
\end{example}

\begin{remark}
Note that Example \ref{ex4} is really the same tangle as Example \ref{ex1} minus two caps/cups where the starting point of the two components lie, and the polynomials match.
\end{remark}

\begin{proof}[Proof of Prop. \ref{prop3}]
The MAIP is, by construction and by Prop. \ref{prop1}, invariant under the Turaev moves for virtual tangles. Two tangles drawn next to each other clearly don't influence each other's labeling, so if $T$ has $i$ components and $T'$ has $j$ components we get $$p_{T\otimes T'}(t_1, \ldots, t_i, t_{i+1}, \ldots, t_{i+j})=p_T(t_1,\ldots, t_i)+p_{T'}(t_{i+1}, \ldots, t_{i+j})$$

Since our tangles are oriented, when composing (i.e. stacking) tangles, the operation has to respect the orientation, and thus must glue the ending point of one component of $T/T'$ to the starting point of a component of $T'/T$. This ensures the resulting tangle is still orientable, and requiring the identification of the ending label with the starting label ensures that we get a compatible labeling on our composite tangle. 

It is possible that multiple components of $T$ and $T'$ get identified this way: $T_{i1}$ glues to $T'_{j1}$ which glues to $T_{i2}$ which glues to $T'_{j2}$ etc. The resulting component will start at the beginning of $T_{i1}$, so for consistency we should express $c'_{j1}, c_{i2}, c'_{j2}, \ldots$ all in terms of $c_{i1}$, and replace their occurrences in $p_T$ and $p_{T'}$ with the appropriate value in terms of $c_{i1}$. This process ensures that the weights of the crossings expressed in $p_T$ and $p_{T'}$ match the weights of the crossings after the composition.

Finally, let's look at the index difference before and after the composition. When two components glue together, the resulting component has an index difference equal to the sum of the index differences of the two components; replacing any occurrence of $\delta_i$ or $\delta_j'$ with $\delta_i+\delta_j'$ ensures the exponent shift caused by $t_k^{\delta_i}$ in $p_T$ or $t_k^{\delta_j'}$ in $p_{T'}$ is accounted for in $p_{T\circ T'}$.
\end{proof}

\subsection{Weights as intersection indices}
\label{homolosection}
The aim of this section is to prove that we can recover the MAIP for tangles (more specifically, the weights at each crossing) from the intersection index of some chosen homology classes related to the tangle.

\begin{defin}
Let $d$ be a self-crossing of a component $T_i$ of $T$. Define its weight as the expression $W^h(d)=(D\cdot D_s)$, where $D_s$ is the component after smoothing the crossing $c$ that contains the starting point of $T_i$, $D$ is the rest of the diagram (other components included) and $\cdot$ is the homological intersection pairing (see Remark \ref{homolopairing}).

Let $d$ be a mixed crossing between two long components $T_i, T_j$ with starting labels $c_i, c_j$ and index differences $\delta_i, \delta_j$ respectively, assuming $T_i$ goes over $T_j$. Smooth the crossing $d$, let $D_\pm$ be the left/right half of this new component, and $D'$ the rest of the diagram; define $W^h(d)=(c_i-c_j)+(D'\cdot D_\pm)$ where the choice of $\pm$ matches the sign of the crossing.

\begin{figure}[!h]
\centering
\includegraphics[scale=0.13]{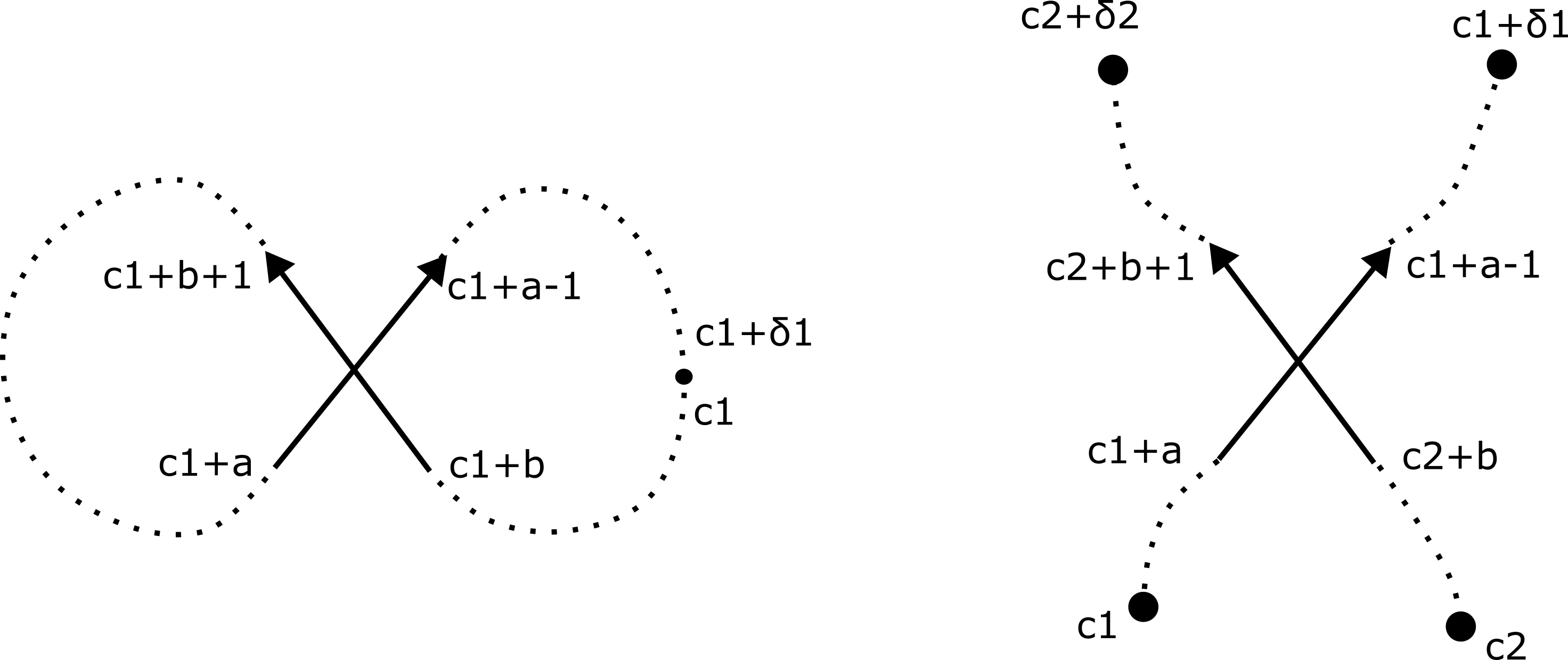}
\caption{A representation of the self-crossing and the two long components cases.}
\label{scopenhomolo}
\end{figure}

Finally, let $d$ be a mixed crossing between components $T_i, T_j$, at least one of which is closed, with starting labels $c_i, c_j$ and index differences $\delta_i, \delta_j$ respectively, with $T_i$ going over $T_j$ at the chosen crossing. Create a ``bridge'' (see Fig. \ref{closedhomolo}) connecting the strands right after the starting points of $T_i, T_j$ that preserves the orientation of both components, creating virtual crossings anytime you pass over another strand. Smooth the crossing $d$, call $D'_\pm$ be the left/right half of this new component, and $D''$ the rest of the diagram; define $W^h(d)=(c_i-c_j)+(D''\cdot D'_\pm)$ where the choice of $\pm$ matches the sign of the crossing.

\begin{figure}[!h]
\centering
\includegraphics[scale=0.13]{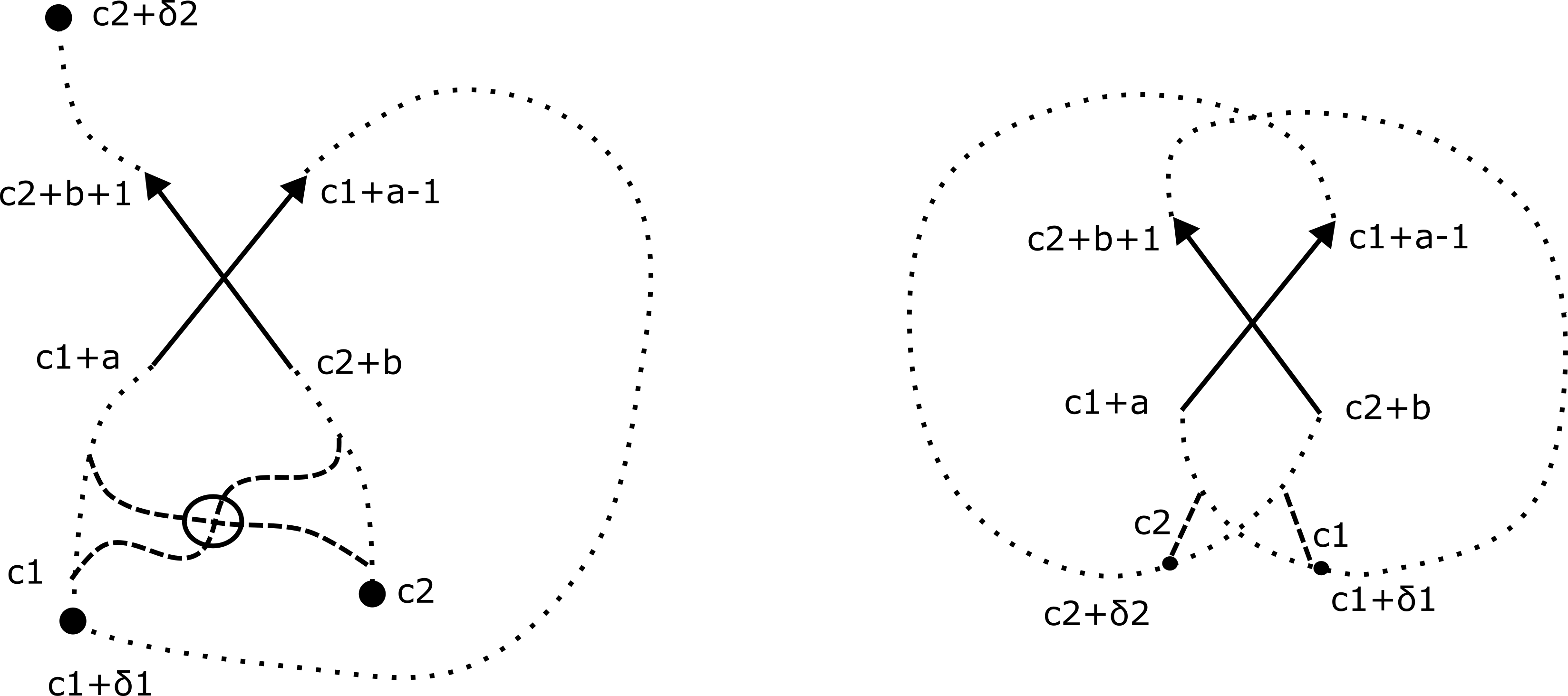}
\caption{The case when one or both of the components are closed. The dashed segments are the ``bridges'' we create..}
\label{closedhomolo}
\end{figure}
\end{defin}

\begin{prop}
\label{prop2}
For any virtual tangle and any classical crossing $d$ of said tangle, $W(d)=\pm(W^h(d)-\delta_j)$, where $W(d)$ is the MAIP weight defined in section \ref{maiptangles} and the sign is taken to be $+$ except if $d$ is a self-crossing that's an early undercrossing of the component.
\end{prop}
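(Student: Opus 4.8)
The plan is to reduce each combinatorial weight to the corresponding homological pairing by way of a single labeling lemma, and then to chase the orientation and index-difference corrections separately in the three cases of the definition. The lemma I would prove first is the precise form of the phrase ``propagating the labels as an intersection index'': for every point $p$ on a component $T_i$, the affine label at $p$ equals $c_i$ plus the algebraic intersection number, in the sense of Remark \ref{homolopairing}, of the sub-arc of $T_i$ running from its start to $p$ against the rest of the diagram, read in a fixed order so that the understrand gains $+sgn(c)$ at each crossing (matching the mnemonic following the definition of $W$). This is proved by induction along the strand: one checks that the prescribed label increment at each classical crossing is exactly the local $\pm 1$ contribution of that transverse point to the pairing, virtual crossings contributing nothing since they disappear in the realization on $F$. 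Granting the lemma, the labels $a,b$ at a crossing $d$ become partial intersection counts of initial arcs, so that $W(d)=a-b-sgn(c)$ becomes a difference of two such counts corrected by the self-contribution of $d$ itself.

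I would treat the self-crossing case first, where $a$ and $b$ both lie on $T_i$. The two preimages of $d$ cut $T_i$ into arcs, and the oriented smoothing closes these into the loop $D_s$ containing the start and a complementary loop, with $D$ the latter together with the other components. Telescoping $a-b$ collapses the two partial counts into the intersection number of $D_s$ against $D$, while the transverse point $d$ of the two smoothed arcs supplies the $-sgn(c)$ term. The order in which the start first reaches the over-arc versus the under-arc reverses the orientation with which the in-between arc closes up, and this is exactly the early-overcrossing / early-undercrossing dichotomy that sets the global sign $\pm$; here $j=i$, and the difference between the partial count terminating at $d$ and the full cycle $D_s$ used by the pairing is precisely the index difference $\delta_i$, which is why the statement subtracts $\delta_j$.

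For a mixed crossing between two long components, $a=c_i+(\cdots)$ and $b=c_j+(\cdots)$ now sit on different strands, so $a-b$ separates cleanly into the constant offset $c_i-c_j$ -- added back by hand in $W^h$ -- plus a difference of intersection counts. Smoothing $d$ and selecting the half $D_\pm$ dictated by $sgn(c)$ turns that difference into $D'\cdot D_\pm$, and as in the self-crossing case the gap between the partial understrand count and the closed cycle is the index difference $\delta_j$, giving $W(d)=\pm(W^h(d)-\delta_j)$. I would then reduce the remaining case, where one or both components are closed, to this one via the bridge construction: attaching an orientation-preserving bridge just past the two starting points, inserting a virtual crossing wherever it runs over an existing strand, converts each closed loop into an arc with a well-defined far endpoint without creating or destroying any classical crossing. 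Since the pairing of Remark \ref{homolopairing} only sees classical crossings on $F$, the bridge leaves every intersection number unchanged while providing the arc structure needed to repeat the long-component computation verbatim on $D''\cdot D'_\pm$.

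The main obstacle is not conceptual but the sign and $\delta_j$ bookkeeping, which must come out uniformly across all three cases. Concretely, I expect the delicate points to be: verifying that closing the traversed arcs into the cycles $D_s$, $D_\pm$, $D'_\pm$ always introduces exactly the discrepancy $\delta_j$; confirming that, for self-crossings, the closing-up flips orientation in precisely the early-undercrossing case and in no other; and checking that the auxiliary virtual crossings created by the bridges never contribute to the pairing. Fixing a single, consistent orientation convention for the homological pairing that makes the relation $b_{\mathrm{out}}=b_{\mathrm{in}}+sgn(c)$ agree with the surface intersection sign at every crossing is where the real care lies, and I would pin that convention down once and for all in the proof of the labeling lemma so the three cases inherit it automatically.
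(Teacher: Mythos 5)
Your proposal follows essentially the same route as the paper's proof: your labeling lemma is precisely the paper's key observation that the label changes along a strand coincide with the local contributions to the homological intersection pairing, and your three-case analysis (self-crossings with the early-overcrossing/early-undercrossing sign dichotomy, mixed crossings of long components with the $c_i-c_j$ offset restored by hand, and the bridge argument for closed components using that bridges create only virtual crossings invisible to the pairing) matches the paper's case-by-case computation, including attributing the $\delta_j$ correction to the discrepancy between partial arcs and the closed-up cycles. There is no gap; you have simply stated the key observation more formally, as an induction along the strand, than the paper does.
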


\begin{cor}
We can express the MAIP in terms of the homological weights as \[\begin{split}p_T(t_1,\ldots, t_n)&=\displaystyle\sum_{\substack{c\in T_i\\c\text{ early undercr.}}}sgn(c)t_i^{-W^h(d)+2\delta_i}+\sum_{\substack{c\in T_i\\ c\text{ early overcr.}}}sgn(c)t_i^{W^h(d)}+\\&+\sum_{\substack{c\in T_i\text{ over }T_j\\T_i\neq T_j}}sgn(c)t_i^{W^h(d)}-\sum_{c\in T_i\text{ over }T_j} sgn(c)t_i^{\delta_j}\end{split}\] 
\end{cor}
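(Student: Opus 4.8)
The plan is to obtain the stated formula by direct substitution of Prop. \ref{prop2} into the defining expression for $p_T$, followed by a careful splitting of the crossing sum according to crossing type. First I would rewrite each summand of the MAIP in the expanded form
$$sgn(c)\,t_i^{\delta_j}\bigl(t_i^{W(d)}-1\bigr)=sgn(c)\,t_i^{\delta_j+W(d)}-sgn(c)\,t_i^{\delta_j},$$
so that the whole polynomial splits into ``leading'' terms $sgn(c)\,t_i^{\delta_j+W(d)}$ and ``correction'' terms $-sgn(c)\,t_i^{\delta_j}$. The correction terms are insensitive to crossing type, so collecting them over every crossing at which $T_i$ is the overstrand reproduces verbatim the final summand $-\sum_{c\in T_i\text{ over }T_j}sgn(c)\,t_i^{\delta_j}$ of the corollary, with no further work required.

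The substance of the argument is then to process the leading terms using Prop. \ref{prop2}, which gives $W(d)=\pm(W^h(d)-\delta_j)$ with the sign positive except for self-crossings that are early undercrossings. I would partition the crossings where $T_i$ is the overstrand into three mutually exclusive and exhaustive classes: self-crossings that are early overcrossings, self-crossings that are early undercrossings, and genuinely mixed crossings with $T_i\neq T_j$. For the first and third classes the sign is $+$, so $W(d)=W^h(d)-\delta_j$ and hence $\delta_j+W(d)=W^h(d)$ (with $j=i$ for the self-crossing case, $j\neq i$ for the mixed case). This yields the second and third sums of the corollary directly.

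The only term calling for genuine exponent arithmetic is the early-undercrossing class, which carries the negative sign. Here $d$ is a self-crossing, so $j=i$ and $\delta_j=\delta_i$, and $W(d)=-(W^h(d)-\delta_i)$, whence
$$\delta_j+W(d)=\delta_i-W^h(d)+\delta_i=-W^h(d)+2\delta_i,$$
producing the first sum $\sum sgn(c)\,t_i^{-W^h(d)+2\delta_i}$. Assembling the three leading sums together with the single correction sum recovers the displayed formula.

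I do not anticipate a serious obstacle, as the corollary is a purely formal consequence of Prop. \ref{prop2}: the geometric content has already been dispatched there. The one place demanding care is the sign-and-shift bookkeeping in the undercrossing case, where the negative sign must be applied to $W^h(d)-\delta_i$ \emph{before} adding $\delta_j$, and where the identity $\delta_j=\delta_i$ for a self-crossing is precisely what converts a single $\delta_i$ shift into the $2\delta_i$ appearing in the exponent. I would also verify that the three-way partition of the overstrand crossings is genuinely exhaustive and disjoint---in particular that every mixed crossing falls under the $+$ sign and contributes $t_i^{W^h(d)}$---so that no crossing is omitted or double-counted between the four sums.
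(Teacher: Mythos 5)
Your proposal is correct and is precisely the argument the paper intends: the corollary is stated without separate proof as a formal consequence of Prop.~\ref{prop2}, obtained exactly as you do by expanding $sgn(c)\,t_i^{\delta_j}(t_i^{W(d)}-1)$, collecting the $-sgn(c)\,t_i^{\delta_j}$ terms, and substituting $W(d)=\pm(W^h(d)-\delta_j)$ according to the three-way partition into early overcrossings, early undercrossings (where $\delta_j=\delta_i$ yields the $-W^h(d)+2\delta_i$ exponent), and mixed crossings. Your bookkeeping in each case matches the paper's conventions, so nothing further is needed.
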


\begin{remark}
The ``bridges'' we introduced in the case of one or two closed components are necessary to ensure we get two homology classes after the smoothing instead of one. If $\delta_i=\delta_j=0$, these ``bridges'' can actually be inserted anywhere along the component as long as they connect two segments whose label is the same as the starting label; for convenience and consistency with the $\delta_i, \delta_j\neq 0$ case we chose to place them at the very beginning of our components.
\end{remark}

\begin{proof}[Proof of Prop. \ref{prop2}]
The key observation is that the intersection pairing at every crossing (pictured earlier in Fig. \ref{homolopairingfig}) changes in the same way that the labeling does when we take the special cycle to be the second cycle and the rest of the diagram to be the first cycle. Every time our preferred cycle goes past an oriented crossing in the bottom left to top right direction, the intersection pairing has value $-1$ and our label changes by $-1$, while if the cycle goes past a crossing in the bottom right to top left direction, both the intersection pairing and the label change have value $+1$. Hence we can simply keep track of the labeling change in a specific portion of our diagram to compute $D'\cdot D'_{\pm}$

For the case of a self-crossing, consider the sample case pictured in Fig. \ref{scopenhomolo}. Since this is a self-crossing, the component needs to close up, as denoted by the dotted lines. Since the homological pairing is the same as the labeling changes, we just need to keep track of how the label changes to compute the pairing. On the left side we must go from $c_1+b+1$ to $c_1+a$, so the net change is $c_1+a-(c_1+b+1)=a-b-1$. Similarly, along the right side the label changes by $c_1+b-(c_1+a-1)=b-a+1$. If the starting point of the component is on the left side, that net change also gains a $+\delta_1$ term, otherwise the $+\delta_1$ gets applied to the other side.
Now it's just a matter of combinatorics: $W^h(d)$ is always defined by picking the side that contains the starting point (and thus the $+\delta_1$), and it is easy to check that regardless of which half contains the starting point we have $W^h(d)=W(d)+\delta_1$ if we have an early overcrossing at $d$, and $W^h(d)=-W(d)+\delta_1$ if $d$ is an early undercrossing.

For a mixed crossing of two long components (also pictured in Fig. \ref{scopenhomolo}), knowing the starting and ending values of both components and the labels at the crossing, we can see that the homological intersection pairing yields $a+(\delta_2-b-1)$ on the left side, and $b+(\delta_1+a-1)$ on the right side. Comparing it to the expected values of $W(d)$, we see that we are only missing the contribution $\pm(c_1-c_2)$ (depending on which component goes on top), which is taken care of by the definition of $W^h(d)=(c_i-c_j)+(D'_\pm\cdot D')$.

Finally, for the case of a mixed crossing involving some closed components, the situation after the ``bridge'' was added is pictured in Fig. \ref{closedhomolo}. Since the ``bridge'' only adds virtual crossings, it does not affect the value of the homological intersection pairing, as the two strands lie on different handles in $F$ (and thus do not cross). After the adjustment and the smoothing, the homological pairing for the left cycle has value $a+(\delta_2-b-1)$ while the right cycle has value $b+(\delta_1-a+1)$. Again, we are only missing the contribution $\pm(c_i-c_j)$ to turn these into the expected value of $W(d)$.
\end{proof}

\section{Conclusion and future work}

This paper achieved the goal outlined by \cite{multivariableaip} to define an invariant compatible with tangle composition. In doing so, even more questions started coming to light. How does this invariant relate to other index polynomials, like the Henrich invariants, the Cheng-Gao polynomial, or the multi-variable Wriggle polynomial? Is this polynomial a concordance invariant? Is there a way to treat every component as if it were a long component, or any crossing in a consistent manner? Can we use similar techniques to define a multi-variable version of the Sawollek polynomial? The author hopes to answer these questions in future work.

We would also find some computational way of calculating this invariant (and the invariants it generalized). The author has an early version of Python code to compute index polynomial invariants available on his personal website, and is currently working on adding the ability to treat virtual tangles to the code. While it does not immediately relate to this paper, comments on the code are also welcome.

\bibliographystyle{amsalpha}
\bibliography{fullbibliography}

\providecommand{\bysame}{\leavevmode\hbox to3em{\hrulefill}\thinspace}
\providecommand{\MR}{\relax\ifhmode\unskip\space\fi MR }
\providecommand{\MRhref}[2]{%
  \href{http://www.ams.org/mathscinet-getitem?mr=#1}{#2}
}
\providecommand{\href}[2]{#2}
\begin{thebibliography}{Kau18}

\bibitem[JM13]{jacksonmoffatt}
D.~M. Jackson and I.~Moffatt, \emph{An introduction to quantum and vassiliev
  knot invariants}, Springer Cham, 2013.

\bibitem[Kau99]{virtualknottheory}
L.~H. Kauffman, \emph{Virtual knot theory}, European J. Comb \textbf{20}
  (1999), no.~7, 663--690.

\bibitem[Kau13]{affineindexpolynomial}
\bysame, \emph{An affine index polynomial invariant of virtual knots}, Journal
  of Knot Theory and Its Ramifications \textbf{22} (2013), no.~04, 1340007.

\bibitem[Kau18]{virtualknotcobordismaffineindex}
Louis~H. Kauffman, \emph{Virtual knot cobordism and the affine index
  polynomial}, Journal of Knot Theory and Its Ramifications \textbf{27} (2018),
  no.~11, 1843017.

\bibitem[Pet20]{multivariableaip}
Nicolas Petit, \emph{The multi-variable affine index polynomial}, Topology and
  its Applications \textbf{274} (2020), 107145.

\bibitem[Tur90]{turaevtanglemoves}
V.~Turaev, \emph{Operator invariants of tangles, and r-matrices}, Mathematics
  of the USSR-Izvestiya \textbf{32} (1990), no.~2, 411--444.

\end{thebibliography}

\end{document}